\documentclass[12pt]{amsart}
\usepackage[utf8]{inputenc}
\usepackage[T1]{fontenc}
\usepackage{amsmath, amssymb, amsthm}
\usepackage{geometry}
\usepackage{booktabs}
\usepackage{color}
\usepackage{xcolor}
\usepackage{graphicx}
\usepackage{hyperref}
\usepackage{multicol}
\usepackage{listings}
\usepackage{caption}
\hypersetup{colorlinks=true,linkcolor=blue,citecolor=blue,urlcolor=blue}
\newtheorem{theorem}{Theorem}[section]

\theoremstyle{definition}

\newtheorem{lemma}{Lemma}[section]
\newtheorem{proposition}{Proposition}[section]
\newtheorem{corollary}{Corollary}[section]

\title[Weighted Product Inequalities for the Sine Function]{Weighted Product Inequalities for the Sine Function: A Gamma-Function Approach and Sharp Comparisons}
\author {Augustine L. Mahu}
\address{Department of Mathematics, University of Ghana, PO. Box LG 62 Legon, Accra, Ghana }
\email{ almahu@ug.edu.gh}
\author {Beno\^it F. Sehba}
\address{Department of Mathematics, University of Ghana, PO. Box LG 62 Legon, Accra, Ghana }
\email{ bfsehba@ug.edu.gh}
\author {Cecilia D. Williams}
\address{Department of Mathematics, University of Ghana, PO. Box LG 62 Legon, Accra, Ghana }
\email{ cdwilliams@st.ug.edu.gh}

\date{}

\begin{document}

\maketitle

\begin{abstract}
Using the log-convexity of the Gamma function and Euler's reflection formula, we give a new proof of a classical weighted sine product inequality. Two different parameter choices yield two competing upper bounds for the same product. We determine precisely, via algebraic criteria, when one bound is sharper than the other. Explicit results are given for the general $n$-angle case, the $2n$-angle case, and for two and three angles. Several sharp corollaries are derived, including $\sin(\pi x)\leq \sin(2\pi x(1-x))$.
\end{abstract}

\bigskip

\section{Introduction}


Inequalities for the sine and cosine have attracted continuous
attention since the early development of mathematical analysis.
The oldest and most cited is Jordan's inequality
\cite{Jordan1894},
\begin{equation}\label{eq:Jordan}
  \frac{2}{\pi}\,\theta \;\leq\; \sin\theta \;\leq\; \theta,
  \qquad 0\leq\theta\leq\frac\pi2,
\end{equation}
which encodes the global concavity of sine on $[0,\pi/2]$. This inequality has been refined and extended by various authors (see \cite{Qi2009,Zhu2006}). 

A second classical line of research concerns products of values
of the sine function.  Wallis's infinite product (1655)
\cite{Wallis1656},
\begin{equation}\label{eq:Wallis}
  \frac\pi2 = \prod_{k=1}^\infty\frac{(2k)(2k)}{(2k-1)(2k+1)},
\end{equation}
can be rewritten in terms of $\sin(k\pi/n)$ and motivated a
long study of inequalities for such products.
The monograph of Mitrinovi\'c \cite{Mitrinovic1970} and the handbook
of Bullen \cite{Bullen2003} give extensive accounts of the classical
theory of trigonometric and power-mean inequalities, including
many results on products of sine values.

We observe that the function $\sin$ is log-concave on $(0,\pi)$. This fact can be combined with the Jensen's inequality to show the following: 
for positive weights $\alpha_i$, $i=1,2,\cdots,n$, and $\alpha=\sum\alpha_i$,
\begin{equation}\label{eq:prodineqsine}
  \prod_{i=1}^{n}\sin^{\alpha_i}(\theta_i)
  \;\leq\;
  \sin^{\alpha}\!\left(\sum_{i=1}^n\frac{\alpha_i}{\alpha}\,\theta_i\right).
\end{equation}
Inequality \eqref{eq:prodineqsine} is the starting point of this
paper. For a proof and some variations, we refer the reader to
\cite{AVV1997,Bullen2003,CP2014,Mitrinovic1970,MPF1993,NS2010}. 

In this paper, we derive \eqref{eq:prodineqsine} not from the log-concavity of $\sin$ directly, but from the \emph{log-convexity} of the Gamma function and Euler's reflection formula. We then, through an appropriate choice of the weights and the angles, obtain two upper bounds in \eqref{eq:prodineqsine} for the same left hand side, for an even number of entries and for the general case. We compare these two upper bounds and obtain exact conditions under which one is better than the other. In the case of two or three weights, we give a more simple representation of these conditions.

While inequality \eqref{eq:prodineqsine} is classical, the following natural question does not appear to have been addressed in the literature: the right-hand side of \eqref{eq:prodineqsine} depends on the choice of weights $\alpha_i$, and for a given product on the left-hand side, different admissible weight choices can yield two distinct upper bounds. Which of these bounds is sharper, and under what exact conditions on the parameters? This paper provides a first and explicit answer.

The paper is organized as follows: Section~\ref{sec:prelim} collects background on the Beta and Gamma functions, the reflection formula, and the log-convexity lemma. In Section~\ref{sec:main}, we present and prove our main results on the estimate \eqref{eq:prodineqsine} and its variations that provide the different upper bounds for the same left hand side. The comparison between the upper bounds is carried out in Section~\ref{sec:compare}. We provide a conclusion for this work in  Section ~\ref{sec:conclusion} where we also present some open problems.

Throughout, for positive quantities
$A$ and $B$, the notation $A\lesssim B$ means $A\leq CB$ for some
absolute constant $C>0$, and $A\approx B$ means $A\lesssim B$
and $B\lesssim A$.

\section{Preliminaries}\label{sec:prelim}

Recall that the Beta function is defined for $x>0$ and $y>0$ by
\[
  B(x,y):=\int_0^1 t^{x-1}(1-t)^{y-1}\,dt.
\]
A related function is the Euler's Gamma function defined by
\[
  \Gamma(x)=\int_0^\infty e^{-t}\,t^{x-1}\,dt,\qquad x>0.
\]
In effect, we have  $$B(x,y)=\frac{\Gamma(x)\Gamma(y)}{\Gamma(x+y)}.$$  For any $0<x<1$,
the following known as the \emph{reflection formula} holds:
\begin{equation}\label{eq:reflection}
  B(1-x,x)=\Gamma(1-x)\,\Gamma(x)=\frac{\pi}{\sin(\pi x)}.
\end{equation}

This formula transforms inequalities for $1/\sin(\pi x)$ into inequalities
for products of Gamma values and vice versa.

The Gamma function is log-convex on $(0,\infty)$.  By the
Bohr--Mollerup theorem \cite{BohrMollerup1922}, $\Gamma$ is the
only log-convex function satisfying $\Gamma(x+1)=x\Gamma(x)$
and $\Gamma(1)=1$.  The quantitative form needed here is as follows.

\begin{lemma}\label{lem:logconvex}
Let $1<p_i<\infty$, $i=1,2,\ldots,n$, and define $p$ by
$\dfrac{1}{p}=\displaystyle\sum_{i=1}^{n}\dfrac{1}{p_i}$.
If $a_1,\ldots,a_n\in[0,1]$, then
\begin{equation}\label{eq:logconvex}
  \Gamma\!\left(\sum_{i=1}^n\frac{p\,a_i}{p_i}\right)
  \;\leq\;
  \prod_{i=1}^n\Gamma^{p/p_i}(a_i).
\end{equation}
\end{lemma}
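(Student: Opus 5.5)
The plan is to prove \eqref{eq:logconvex} directly from the integral definition of $\Gamma$ using the generalized H\"older inequality, so that log-convexity is not needed as an external input. Set $\lambda_i:=p/p_i$. By the definition of $p$, the $\lambda_i$ are positive and $\sum_{i=1}^n\lambda_i=1$. Consequently, for $n\ge 2$ each $\lambda_i\in(0,1)$, so the exponents $q_i:=1/\lambda_i=p_i/p$ lie in $(1,\infty)$ and satisfy $\sum_i 1/q_i=1$. For $n=1$ we have $p=p_1$, so $\lambda_1=1$ and \eqref{eq:logconvex} is an equality; this case can be dismissed at once.

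Next I would dispose of the boundary values. If some $a_i=0$, then $\Gamma(a_i)=+\infty$ and the right-hand side is $+\infty$ (using $\lambda_i>0$), so the inequality holds trivially with the usual convention. I would state this convention explicitly, since the lemma allows $a_i\in[0,1]$ and $\Gamma$ is only defined for $x>0$. If all $a_i>0$, then $s:=\sum_i\lambda_i a_i>0$, so $\Gamma(s)$ is a genuine finite integral. The restriction $a_i\le 1$ plays no role in the argument; the inequality in fact holds for all $a_i>0$.

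The main step is the factorization of the integrand. Because $\sum_i\lambda_i=1$, for $t>0$ we can write
\[
e^{-t}t^{s-1}=\prod_{i=1}^n\bigl(e^{-t}t^{a_i-1}\bigr)^{\lambda_i}.
\]
Apply the generalized H\"older inequality on $(0,\infty)$ with exponents $q_i$ to the nonnegative functions $f_i(t)=\bigl(e^{-t}t^{a_i-1}\bigr)^{\lambda_i}$. This gives
\[
\Gamma(s)=\int_0^\infty\prod_{i=1}^n f_i(t)\,dt\le\prod_{i=1}^n\Bigl(\int_0^\infty f_i(t)^{q_i}\,dt\Bigr)^{1/q_i}=\prod_{i=1}^n\Gamma(a_i)^{\lambda_i},
\]
which is exactly \eqref{eq:logconvex}, since $\lambda_i=p/p_i$.

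There is no real obstacle in this argument. The only point needing care is the $n$-fold H\"older inequality. If it is not taken as known, I would derive it by induction on $n$ from the two-function case: group the last $n-1$ factors together and renormalize their exponents. Alternatively, I would derive it from the weighted AM--GM inequality $\prod_i x_i^{\lambda_i}\le\sum_i\lambda_i x_i$, applied pointwise to the normalized functions $f_i^{q_i}/\|f_i\|_{q_i}^{q_i}$ and then integrated.
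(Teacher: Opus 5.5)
Your proof is correct and follows the same route as the paper, which says only that the lemma follows from the integral representation of $\Gamma$ and the generalized H\"older inequality. You fill this in by factoring $e^{-t}t^{s-1}=\prod_i (e^{-t}t^{a_i-1})^{p/p_i}$ and applying H\"older with exponents $p_i/p$, and your handling of $n=1$ and $a_i=0$ (via $\Gamma(0)=+\infty$), together with your remark that $a_i\le 1$ is never used, are accurate refinements the paper omits.
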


\begin{proof}
This is a consequence of the integral representation of $\Gamma$
and the generalized H\"older's inequality.
\end{proof}

\section{Main results}\label{sec:main}
In this section, we present our approach to the inequality \eqref{eq:prodineqsine} through the log-convexity of the Gamma function. We also give for the same left hand side in \eqref{eq:prodineqsine}, two upper bounds. Let us start with the following result.
\begin{theorem}\label{thm:main}
Let $1<p_i<\infty$, $i=1,2,\ldots,n$, and define $p$ by
$\dfrac{1}{p}=\displaystyle\sum_{i=1}^{n}\dfrac{1}{p_i}$.
If $a_1,\ldots,a_n\in[0,1]$, then
\begin{equation}\label{eq:main}
  \Gamma\!\left(\sum_{i=1}^n\frac{p\,a_i}{p_i}\right)
  \Gamma\!\left(\sum_{i=1}^n\frac{p(1-a_i)}{p_i}\right)
  \;\leq\;
  \prod_{i=1}^n
  \bigl[\Gamma(a_i)\,\Gamma(1-a_i)\bigr]^{p/p_i}.
\end{equation}
\end{theorem}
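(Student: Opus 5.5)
The plan is to apply Lemma~\ref{lem:logconvex} twice, once to the numbers $a_i$ and once to the numbers $1-a_i$, and multiply the two resulting inequalities. Since each $a_i\in[0,1]$, each $1-a_i$ also lies in $[0,1]$. So the lemma applies to both families with the same exponents $p_i$ and the same $p$. The first application gives
\[
\Gamma\!\left(\sum_{i=1}^n\frac{p\,a_i}{p_i}\right)\le\prod_{i=1}^n\Gamma^{p/p_i}(a_i).
\]
The second gives
\[
\Gamma\!\left(\sum_{i=1}^n\frac{p(1-a_i)}{p_i}\right)\le\prod_{i=1}^n\Gamma^{p/p_i}(1-a_i).
\]

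All quantities involved are positive: $\Gamma>0$ on $(0,\infty)$, and we interpret $\Gamma(0)=+\infty$ at the endpoints. Hence the two inequalities can be multiplied termwise. Combining the two products on the right, $\Gamma^{p/p_i}(a_i)\Gamma^{p/p_i}(1-a_i)=[\Gamma(a_i)\Gamma(1-a_i)]^{p/p_i}$, which yields \eqref{eq:main} directly.

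The only point needing care is the boundary cases where some $a_i\in\{0,1\}$. In that case the factor $\Gamma(a_i)\Gamma(1-a_i)$ is infinite, so the right-hand side is $+\infty$. The inequality then holds trivially, provided the left side is finite or is read in the extended sense.

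The arguments on the left are always positive unless all $a_i=0$ or all $a_i=1$. Indeed, $\sum_i p/p_i=1$, so the two arguments on the left are $s=\sum_i pa_i/p_i$ and $1-s$. Each is a convex combination of points of $[0,1]$. Thus I would first dispose of the degenerate cases $a_i\in\{0,1\}$ and then run the two-fold application of the lemma on $(0,1)^n$, where everything is finite and the argument is immediate.

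I expect no genuine obstacle beyond this bookkeeping at the endpoints. The identity $\sum_i p/p_i=1$ is worth noting here, since it is what later allows Euler's reflection formula \eqref{eq:reflection} to convert both sides into sine values.
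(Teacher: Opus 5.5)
Your proposal is correct and follows the paper's proof exactly: apply Lemma~\ref{lem:logconvex} to $(a_i)$ and to $(1-a_i)$, then multiply the two inequalities. Your treatment of the endpoints $a_i\in\{0,1\}$, reading $\Gamma(0)=+\infty$, is a harmless refinement that the paper leaves implicit.
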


\begin{proof}
By Lemma~\ref{lem:logconvex} applied to $(a_i)$ and to $(1-a_i)$,
\begin{equation}\label{eq:logconvex1a}
    \Gamma\!\left(\sum_{i=1}^n\frac{p\,a_i}{p_i}\right)
  \leq \prod_{i=1}^n\Gamma^{p/p_i}(a_i),
\end{equation}
and
\begin{equation} \label{eq:logconvex1b}
     \Gamma\!\left(\sum_{i=1}^n\frac{p(1-a_i)}{p_i}\right)
  \leq \prod_{i=1}^n\Gamma^{p/p_i}(1-a_i).
\end{equation}
Multiplying \eqref{eq:logconvex1a} and \eqref{eq:logconvex1b}
side by side yields \eqref{eq:main}.
\end{proof}

Applying the reflection formula \eqref{eq:reflection} to both
sides of \eqref{eq:main} now converts the Gamma inequality into
the desired sine inequality.

\begin{corollary}\label{cor:main}
Let $1<p_i<\infty$, $i=1,2,\ldots,n$, and define $p$ as above.
If $a_1,\ldots,a_n\in(0,1)$, then
\begin{equation}\label{eq:cormain}
  \prod_{i=1}^n\sin^{1/p_i}(\pi a_i)
  \;\leq\;
  \sin^{1/p}\!\left(\pi\sum_{i=1}^n\frac{p}{p_i}\,a_i\right).
\end{equation}
\end{corollary}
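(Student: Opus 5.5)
We apply Theorem~\ref{thm:main} with the given $a_i\in(0,1)$ and rewrite both sides using the reflection formula \eqref{eq:reflection}.

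\textbf{Step 1: the argument stays in $(0,1)$.} Set
\[
x:=\sum_{i=1}^n\frac{p}{p_i}\,a_i .
\]
Since $\sum_{i} p/p_i=1$, the number $x$ is a convex combination of the $a_i$, so $x\in(0,1)$. Moreover
\[
\sum_{i=1}^n\frac{p(1-a_i)}{p_i}=\sum_{i=1}^n\frac{p}{p_i}-x=1-x.
\]
Hence the left-hand side of \eqref{eq:main} equals $\Gamma(x)\Gamma(1-x)$.

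\textbf{Step 2: apply the reflection formula to both sides.} By \eqref{eq:reflection}, $\Gamma(x)\Gamma(1-x)=\pi/\sin(\pi x)$, and each factor on the right is $\Gamma(a_i)\Gamma(1-a_i)=\pi/\sin(\pi a_i)$. So \eqref{eq:main} becomes
\[
\frac{\pi}{\sin(\pi x)}\le\prod_{i=1}^n\left(\frac{\pi}{\sin(\pi a_i)}\right)^{p/p_i}
= \pi\prod_{i=1}^n\sin^{-p/p_i}(\pi a_i),
\]
where again $\sum_i p/p_i=1$ collects the powers of $\pi$ into a single $\pi$.

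\textbf{Step 3: invert and take roots.} All quantities are positive because $x$ and the $a_i$ lie in $(0,1)$. Taking reciprocals gives
\[
\prod_{i=1}^n\sin^{p/p_i}(\pi a_i)\le\sin(\pi x).
\]
Raising both sides to the positive power $1/p$ yields \eqref{eq:cormain}.

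\textbf{Main obstacle.} There is no real difficulty. The only points needing care are the range and normalization checks: the identity $\sum_i p/p_i=1$ is what makes $x\in(0,1)$, turns the second argument into $1-x$, and cancels the powers of $\pi$; and positivity of the sines is what justifies inverting the inequality.
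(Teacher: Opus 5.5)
Your proposal is correct and follows the paper's route: apply Theorem~\ref{thm:main} to the given $a_i$ and convert both sides with the reflection formula \eqref{eq:reflection}. You also spell out the bookkeeping the paper leaves implicit, namely that $\sum_i p/p_i=1$ places $x$ in $(0,1)$, turns the second Gamma argument into $1-x$, and collapses the powers of $\pi$, along with the positivity that justifies inverting the inequality.
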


Note that \eqref{eq:cormain} is exactly \eqref{eq:prodineqsine}
with $\theta_i=\pi a_i$ and $\alpha_i=1/p_i$. Thus,
Corollary~\ref{cor:main} provides an alternative proof of the
classical inequality \eqref{eq:prodineqsine}.

We now give two formulations of Corollary~\ref{cor:main} when the number of parameters $n$ is even, by making two different choices of the auxiliary arguments $a_i$ in \eqref{eq:cormain}.

\begin{proposition}\label{prop:main1}
Let $n=2k$, $k\in\mathbb{N}$, and let
$\lambda_1,\ldots,\lambda_{2k}\in(0,1)$.
Set $\lambda=\displaystyle\sum_{i=1}^k
\bigl[(1-\lambda_{2i})+\lambda_{2i-1}\bigr]$.  Then
\begin{equation}\label{eq:propmain1}
  \prod_{i=1}^{k}\sin^{1-\lambda_{2i}}(\pi\lambda_{2i-1})\,
  \sin^{\lambda_{2i-1}}(\pi\lambda_{2i})
  \;\leq\;
  \sin^{\lambda}\!\left(\pi\sum_{i=1}^k
  \frac{\lambda_{2i-1}}{\lambda}\right).
\end{equation}
\end{proposition}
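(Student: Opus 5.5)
The plan is to obtain \eqref{eq:propmain1} as a direct specialization of Corollary~\ref{cor:main} with $n=2k$. We choose the weights $1/p_j$ and the arguments $a_j$ so that the left-hand side of \eqref{eq:cormain} becomes exactly the product in \eqref{eq:propmain1}. For each $i=1,\ldots,k$ we set
\[
  \frac{1}{p_{2i-1}} = 1-\lambda_{2i},\quad a_{2i-1}=\lambda_{2i-1},
  \qquad
  \frac{1}{p_{2i}} = \lambda_{2i-1},\quad a_{2i}=\lambda_{2i}.
\]
Since all $\lambda_j\in(0,1)$, each weight $1/p_j$ lies in $(0,1)$, so $1<p_j<\infty$. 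Moreover, each $a_j\in(0,1)$, as Corollary~\ref{cor:main} requires. With this choice,
\[
  \frac{1}{p}=\sum_{j=1}^{2k}\frac{1}{p_j}=\sum_{i=1}^k\bigl[(1-\lambda_{2i})+\lambda_{2i-1}\bigr]=\lambda .
\]
The left-hand side of \eqref{eq:cormain} is then $\prod_{i=1}^k \sin^{1-\lambda_{2i}}(\pi\lambda_{2i-1})\sin^{\lambda_{2i-1}}(\pi\lambda_{2i})$.

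Next we compute the argument on the right-hand side. We have
\[
  \sum_{j=1}^{2k}\frac{p}{p_j}a_j
  =\frac{1}{\lambda}\sum_{i=1}^k\Bigl[(1-\lambda_{2i})\lambda_{2i-1}+\lambda_{2i-1}\lambda_{2i}\Bigr]
  =\frac{1}{\lambda}\sum_{i=1}^k\lambda_{2i-1}.
\]
The cross terms $\pm\lambda_{2i-1}\lambda_{2i}$ cancel, and this cancellation is the point of the pairing. Since $\sin^{1/p}=\sin^{\lambda}$, the right-hand side of \eqref{eq:cormain} is exactly the right-hand side of \eqref{eq:propmain1}. As a sanity check, the argument is a convex combination (weights $p/p_j$ summing to $1$) of numbers $a_j\in(0,1)$, so it lies in $(0,1)$ and the sine is positive. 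This makes the fractional power $\sin^\lambda$ well defined.

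The only point I expect to need care is that $\lambda$ may exceed $1$, so that $p<1$. Corollary~\ref{cor:main}, and Lemma~\ref{lem:logconvex} behind it, are stated with $p$ defined only through $1/p=\sum 1/p_j$, so $p<1$ is formally allowed. I would still verify that the Hölder step in Lemma~\ref{lem:logconvex} survives. That step uses exponents $p_j/p$ whose reciprocals $p/p_j$ are positive and sum to $1$, which holds regardless of the size of $p$, so the lemma is valid. Theorem~\ref{thm:main} and the reflection formula \eqref{eq:reflection} then give \eqref{eq:cormain} with this choice of parameters, and hence \eqref{eq:propmain1}.
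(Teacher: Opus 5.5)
Your proposal is correct and is the paper's own proof. It makes the same substitution $a_{2i-1}=\lambda_{2i-1}$, $1/p_{2i-1}=1-\lambda_{2i}$, $a_{2i}=\lambda_{2i}$, $1/p_{2i}=\lambda_{2i-1}$, $1/p=\lambda$ into Corollary~\ref{cor:main}, and the cross terms $\pm\lambda_{2i-1}\lambda_{2i}$ cancel in the argument. You also check something the paper leaves implicit: the H\"older/log-convexity step only needs the weights $p/p_j$ to be positive and sum to $1$, so the case $\lambda>1$ (i.e.\ $p<1$) causes no difficulty.
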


\begin{proof}
One checks that
\[
  \sin^\lambda\!\left(\pi\sum_{i=1}^k\frac{\lambda_{2i-1}}{\lambda}\right)
  = \sin^\lambda\!\left(\pi\,
  \frac{\sum_{i=1}^k
    \bigl[\lambda_{2i-1}(1-\lambda_{2i})+\lambda_{2i-1}\lambda_{2i}\bigr]}
  {\sum_{i=1}^k\bigl[(1-\lambda_{2i})+\lambda_{2i-1}\bigr]}\right).
\]
Then \eqref{eq:propmain1} follows from \eqref{eq:cormain} with
$a_{2i-1}=\lambda_{2i-1}$, $1/p_{2i-1}=1-\lambda_{2i}$,
$a_{2i}=\lambda_{2i}$, $1/p_{2i}=\lambda_{2i-1}$ and $\frac{1}{p}=\lambda$.
\end{proof}

The following second proposition produces a different weighted
mean angle, and hence a genuinely different upper bound.

\begin{proposition}\label{prop:main2}
Under the hypotheses of Proposition~\ref{prop:main1},
\begin{equation}\label{eq:propmain2}
  \prod_{i=1}^{k}\sin^{1-\lambda_{2i}}(\pi\lambda_{2i-1})\,
  \sin^{\lambda_{2i-1}}(\pi\lambda_{2i})
  \;\leq\;
  \sin^{\lambda}\!\left(\pi\sum_{i=1}^k
  \frac{2\lambda_{2i-1}(1-\lambda_{2i})}{\lambda}\right).
\end{equation}
\end{proposition}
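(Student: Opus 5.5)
The plan is to apply Corollary~\ref{cor:main} again, this time with a different choice of the auxiliary arguments $a_{2i}$. The key observation is the symmetry $\sin(\pi x)=\sin(\pi(1-x))$ for $x\in(0,1)$. It lets us rewrite each factor $\sin^{\lambda_{2i-1}}(\pi\lambda_{2i})$ on the left-hand side as $\sin^{\lambda_{2i-1}}(\pi(1-\lambda_{2i}))$ without changing the product. With this rewriting, the weighted mean of the angles changes, and it produces exactly the argument in \eqref{eq:propmain2}.

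Concretely, in \eqref{eq:cormain} I will take, for $i=1,\ldots,k$:
\[
a_{2i-1}=\lambda_{2i-1},\quad \frac{1}{p_{2i-1}}=1-\lambda_{2i},\qquad a_{2i}=1-\lambda_{2i},\quad \frac{1}{p_{2i}}=\lambda_{2i-1}.
\]
These give $\frac1p=\sum_{i=1}^k\bigl[(1-\lambda_{2i})+\lambda_{2i-1}\bigr]=\lambda$, as in Proposition~\ref{prop:main1}. The hypotheses of the corollary hold for the following reasons:
\begin{itemize}
\item since $\lambda_j\in(0,1)$, each weight $1/p_j$ lies in $(0,1)$, so $1<p_j<\infty$;
\item each $a_j$ lies in $(0,1)$.
\end{itemize}
By the symmetry of sine, the left-hand side of \eqref{eq:cormain} is then exactly the left-hand side of \eqref{eq:propmain2}.

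Next I compute the weighted sum of angles on the right-hand side of \eqref{eq:cormain}:
\[
\sum_{j=1}^{2k}\frac{p}{p_j}a_j=\frac{1}{\lambda}\sum_{i=1}^k\bigl[(1-\lambda_{2i})\lambda_{2i-1}+\lambda_{2i-1}(1-\lambda_{2i})\bigr]=\sum_{i=1}^k\frac{2\lambda_{2i-1}(1-\lambda_{2i})}{\lambda}.
\]
This is precisely the angle appearing in \eqref{eq:propmain2}. Raising to the power $1/p=\lambda$ then gives the claimed bound.

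The only point needing care is that the corollary is stated for $a_i\in(0,1)$, so that the reflection formula \eqref{eq:reflection} applies. The new right-hand argument is a convex combination of the $a_j$, with weights $p/p_j$ summing to $1$. It therefore lies in $(0,1)$ automatically, and the sine on the right is positive. I expect no real obstacle beyond this bookkeeping. The substance of the proposition is the remark that replacing $\lambda_{2i}$ by $1-\lambda_{2i}$ leaves the product unchanged but changes the weighted mean angle. This yields a bound genuinely different from \eqref{eq:propmain1}, to be compared in Section~\ref{sec:compare}.
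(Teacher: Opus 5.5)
Your proposal is correct and matches the paper's proof. The paper makes the same substitution $a_{2i-1}=\lambda_{2i-1}$, $1/p_{2i-1}=1-\lambda_{2i}$, $a_{2i}=1-\lambda_{2i}$, $1/p_{2i}=\lambda_{2i-1}$ into Corollary~\ref{cor:main}, and uses $\sin(\pi(1-\lambda_{2i}))=\sin(\pi\lambda_{2i})$ and $1/p=\lambda$. Your checks that $1<p_j<\infty$, that each $a_j\in(0,1)$, and that the resulting angle is a convex combination lying in $(0,1)$ fill in details the paper leaves implicit.
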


\begin{proof}
Similarly, the right-hand side is
equal to $\sin^\lambda\!\left(\pi\,
\displaystyle\sum_{i=1}^k \frac{2\lambda_{2i-1}(1-\lambda_{2i})}{\lambda}\right)$.
Take $a_{2i-1}=\lambda_{2i-1}$, $1/p_{2i-1}=1-\lambda_{2i}$,
$a_{2i}=1-\lambda_{2i}$, $1/p_{2i}=\lambda_{2i-1}$
in \eqref{eq:cormain}, and use
$\sin(\pi(1-\lambda_{2i}))=\sin(\pi\lambda_{2i})$, and $\frac{1}{p}=\lambda$.
\end{proof}

The next two propositions cover the general case of $n$ parameters without the parity restriction.  

\begin{proposition}\label{prop:main3}
Let $\lambda_1,\ldots,\lambda_n\in(0,1)$ and
$\lambda=\displaystyle\sum_{i=1}^n\lambda_i$.  Then
\begin{equation}\label{eq:propmain3}
  \Bigl(\prod_{i=1}^{n-1}\sin^{\lambda_{i+1}}(\pi\lambda_i)\Bigr)
  \sin^{\lambda_1}(\pi\lambda_n)
  \;\leq\;
  \sin^\lambda\!\left(\frac\pi\lambda
  \Bigl[\sum_{i=1}^{n-1}\lambda_{i+1}(1-\lambda_i)+\lambda_1\lambda_n
  \Bigr]\right).
\end{equation}
\end{proposition}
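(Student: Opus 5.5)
We will deduce \eqref{eq:propmain3} directly from Corollary~\ref{cor:main} by choosing the weights cyclically and replacing each angle by its supplement. The exponents on the left are $\lambda_2,\lambda_3,\ldots,\lambda_n,\lambda_1$. Their sum is $\lambda$, so we set
\[
\frac1{p_i}=\lambda_{i+1}\ (1\le i\le n-1),\qquad \frac1{p_n}=\lambda_1,\qquad \frac1p=\sum_{i=1}^n\frac1{p_i}=\lambda.
\]
Since every $\lambda_j\in(0,1)$, each $p_i>1$, as Corollary~\ref{cor:main} requires. There is no restriction on $p$ itself, since the corollary only defines it through $1/p=\sum 1/p_i$.

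For the arguments, the right-hand side of \eqref{eq:propmain3} involves $1-\lambda_i$ for $i\le n-1$ but $\lambda_n$ itself for the last term. We therefore take
\[
a_i=1-\lambda_i\ (1\le i\le n-1),\qquad a_n=\lambda_n .
\]
All of these lie in $(0,1)$. Since $\sin(\pi(1-\lambda_i))=\sin(\pi\lambda_i)$, the left side of \eqref{eq:cormain} becomes $\prod_{i=1}^{n-1}\sin^{\lambda_{i+1}}(\pi\lambda_i)\cdot\sin^{\lambda_1}(\pi\lambda_n)$, which is exactly the left side of \eqref{eq:propmain3}.

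On the right of \eqref{eq:cormain}, the exponent is $1/p=\lambda$. The angle is
\[
\pi\sum_{i=1}^n \frac{p}{p_i}a_i=\frac{\pi}{\lambda}\Bigl[\sum_{i=1}^{n-1}\lambda_{i+1}(1-\lambda_i)+\lambda_1\lambda_n\Bigr],
\]
which matches the stated bound.

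The only point needing care is the sine on the right-hand side. The argument $\pi\sum (p/p_i)a_i$ is $\pi$ times a convex combination of the $a_i\in(0,1)$, so it lies in $(0,\pi)$ and the sine there is positive. The power $\sin^\lambda$ is therefore well defined even though $\lambda$ may exceed $1$. Corollary~\ref{cor:main} is stated with the $1/p$ power and needs nothing beyond this, so this is a bookkeeping check rather than a genuine obstacle.
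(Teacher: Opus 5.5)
Your proposal is correct and is exactly the paper's proof. You make the same substitution $a_i=1-\lambda_i$, $1/p_i=\lambda_{i+1}$ for $i\le n-1$, $a_n=\lambda_n$, $1/p_n=\lambda_1$, $1/p=\lambda$ into Corollary~\ref{cor:main}, and your extra checks ($p_i>1$, and the right-hand argument lying in $(0,\pi)$) are valid details the paper leaves implicit.
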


\begin{proof}
Apply \eqref{eq:cormain} with $a_i=1-\lambda_i$,
$1/p_i=\lambda_{i+1}$ ($i=1,\ldots,n-1$), and
$a_n=\lambda_n$, $1/p_n=\lambda_1$, and $\frac{1}{p}=\lambda$.
\end{proof}

\begin{proposition}\label{prop:main4}
Under the hypotheses of Proposition~\ref{prop:main3},
\begin{equation}\label{eq:propmain4}
  \Bigl(\prod_{i=1}^{n-1}\sin^{\lambda_{i+1}}(\pi\lambda_i)\Bigr)
  \sin^{\lambda_1}(\pi\lambda_n)
  \;\leq\;
  \sin^\lambda\!\left(\frac\pi\lambda
  \Bigl[\sum_{i=1}^{n-1}\lambda_{i+1}\lambda_i+\lambda_1\lambda_n
  \Bigr]\right).
\end{equation}
\end{proposition}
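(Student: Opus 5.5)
We apply Corollary~\ref{cor:main} exactly as in the proof of Proposition~\ref{prop:main3}, with the same weights but the other choice of auxiliary arguments. We take $1/p_i=\lambda_{i+1}$ for $i=1,\ldots,n-1$ and $1/p_n=\lambda_1$, so that
\[
\frac1p=\sum_{i=1}^n\frac1{p_i}=\lambda_2+\cdots+\lambda_n+\lambda_1=\lambda .
\]
For the arguments we now set $a_i=\lambda_i$ for every $i=1,\ldots,n$, instead of $a_i=1-\lambda_i$ as before. Since each $\lambda_i\in(0,1)$, we have $a_i\in(0,1)$, so the hypotheses of Corollary~\ref{cor:main} on the arguments hold.

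The left-hand side of \eqref{eq:cormain} is then
\[
\prod_{i=1}^n\sin^{1/p_i}(\pi a_i)=\Bigl(\prod_{i=1}^{n-1}\sin^{\lambda_{i+1}}(\pi\lambda_i)\Bigr)\sin^{\lambda_1}(\pi\lambda_n),
\]
which is exactly the left-hand side of \eqref{eq:propmain4}. On the right-hand side we have $\sin^{1/p}=\sin^\lambda$, and the angle is
\[
\pi\sum_{i=1}^n\frac{p}{p_i}a_i=\frac{\pi}{\lambda}\Bigl[\sum_{i=1}^{n-1}\lambda_{i+1}\lambda_i+\lambda_1\lambda_n\Bigr],
\]
which is the angle appearing in \eqref{eq:propmain4}. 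This proves the inequality.

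The one place needing care is the requirement $1<p_i<\infty$ in Corollary~\ref{cor:main}, which becomes $1/p_i\in(0,1)$ and holds because every $\lambda_i\in(0,1)$. The corollary does not additionally require $p>1$, that is $\lambda<1$. If one wants to double-check this, note that Lemma~\ref{lem:logconvex} only uses the normalized weights $p/p_i$, which sum to $1$. So Jensen's inequality for the log-convex function $\Gamma$ (equivalently, H\"older's inequality) applies regardless of the size of $\lambda$.

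We should also confirm that the combined argument lies in $(0,1)$, so that the reflection formula can be used. It does: the quantity $\sum_i (p/p_i)a_i$ is a convex combination of the numbers $a_i\in(0,1)$, and hence itself lies in $(0,1)$.
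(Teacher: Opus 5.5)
Your proof is correct and is exactly the paper's argument: apply Corollary~\ref{cor:main} with $a_i=\lambda_i$ for all $i$, $1/p_i=\lambda_{i+1}$ for $i\le n-1$, and $1/p_n=\lambda_1$, so that $1/p=\lambda$. Your extra checks are valid details that the paper leaves implicit: no condition $p>1$ is needed, since only the normalized weights $p/p_i$ (summing to $1$) enter, and the averaged argument is a convex combination of numbers in $(0,1)$, so the reflection formula applies.
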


\begin{proof}
Apply \eqref{eq:cormain} with $a_i=\lambda_i$,
$1/p_i=\lambda_{i+1}$ ($i=1,\ldots,n-1$), and
$a_n=\lambda_n$, $1/p_n=\lambda_1$, with $\frac{1}{p}=\lambda$.
\end{proof}

In the two cases above, both associated propositions share the same left-hand side. The comparison between their two right-hand sides is carried out in the next section. We also note that the two propositions can be used to generate some other variations on these products. We give an illustration that is not exhaustive in the following.
\begin{corollary}\label{cor:main34}
 Let $\lambda_1,\ldots,\lambda_n\in(0,1)$ and
$\lambda=\left(\displaystyle\sum_{i=1}^{n-1}\lambda_i\right)+2\lambda_n$.  Then
\begin{equation}\label{eq:propmain31}
  \Bigl(\prod_{i=1}^{n-1}\sin^{\lambda_{i+1}}(\pi\lambda_i)\Bigr)
  \sin^{\lambda_1+\lambda_n}(\pi\lambda_n)
  \;\leq\;
  \sin^\lambda\!\left(\frac\pi\lambda
  \Bigl[\sum_{i=1}^{n-1}\lambda_{i+1}(1-\lambda_i)+\lambda_n(\lambda_1+1-\lambda_n)
  \Bigr]\right)
\end{equation}
 and  

 \begin{equation}\label{eq:propmain41}
  \Bigl(\prod_{i=1}^{n-1}\sin^{\lambda_{i+1}}(\pi\lambda_i)\Bigr)
  \sin^{\lambda_1+\lambda_n}(\pi\lambda_n)
  \;\leq\;
  \sin^\lambda\!\left(\frac\pi\lambda
  \Bigl[\sum_{i=1}^{n-1}\lambda_{i+1}\lambda_i+\lambda_n(\lambda_1+\lambda_n).
  \Bigr]\right)
\end{equation}

\end{corollary}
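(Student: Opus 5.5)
The plan is to derive both inequalities directly from Propositions~\ref{prop:main3} and \ref{prop:main4}, applied with $n+1$ parameters in place of $n$, where the last angle $\lambda_n$ is used twice. Concretely, I will define $\mu_1,\ldots,\mu_{n+1}\in(0,1)$ by
\[
\mu_i=\lambda_i \quad (1\le i\le n), \qquad \mu_{n+1}=\lambda_n .
\]
Their sum is $\sum_{i=1}^{n+1}\mu_i=\sum_{i=1}^{n-1}\lambda_i+2\lambda_n$, which is exactly the $\lambda$ of the statement.

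First I will check that the left-hand sides match. For the $\mu$'s, the left-hand side of Proposition~\ref{prop:main3} is
\[
\Bigl(\prod_{i=1}^{n}\sin^{\mu_{i+1}}(\pi\mu_i)\Bigr)\sin^{\mu_1}(\pi\mu_{n+1}).
\]
Separating the term $i=n$, which is $\sin^{\lambda_n}(\pi\lambda_n)$, and merging it with the final factor $\sin^{\lambda_1}(\pi\lambda_n)$ gives $\bigl(\prod_{i=1}^{n-1}\sin^{\lambda_{i+1}}(\pi\lambda_i)\bigr)\sin^{\lambda_1+\lambda_n}(\pi\lambda_n)$. This is the common left-hand side of \eqref{eq:propmain31} and \eqref{eq:propmain41}.

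Next I will check the bracketed angle expressions. In Proposition~\ref{prop:main3} the bracket is $\sum_{i=1}^{n}\mu_{i+1}(1-\mu_i)+\mu_1\mu_{n+1}$. This equals
\[
\sum_{i=1}^{n-1}\lambda_{i+1}(1-\lambda_i)+\lambda_n(1-\lambda_n)+\lambda_1\lambda_n
=\sum_{i=1}^{n-1}\lambda_{i+1}(1-\lambda_i)+\lambda_n(\lambda_1+1-\lambda_n),
\]
which gives \eqref{eq:propmain31}. In Proposition~\ref{prop:main4} the bracket is $\sum_{i=1}^{n}\mu_{i+1}\mu_i+\mu_1\mu_{n+1}$, which equals $\sum_{i=1}^{n-1}\lambda_{i+1}\lambda_i+\lambda_n^2+\lambda_1\lambda_n$. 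That is the bracket in \eqref{eq:propmain41}.

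The only step that needs care is that the hypotheses of the propositions survive the repetition of $\lambda_n$. Propositions~\ref{prop:main3} and \ref{prop:main4} only require each parameter to lie in $(0,1)$; they do not require the parameters to be distinct, so this holds. Behind them, Corollary~\ref{cor:main} needs each weight $1/p_i=\mu_{i+1}$ (or $\mu_1$) to lie in $(0,1)$, which holds, while $1/p=\lambda$ may exceed $1$ without harm. The sine argument is $\pi$ times a convex combination of points $a_i\in(0,1)$, so it stays in $(0,\pi)$ and both sides are well defined and positive. With these checks, the corollary follows directly from the two propositions.
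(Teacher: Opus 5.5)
Your proposal is correct and matches the paper's argument: the paper also writes Propositions~\ref{prop:main3} and~\ref{prop:main4} for $n+1$ parameters and then sets $\lambda_{n+1}=\lambda_n$. Your explicit checks of the merged factor $\sin^{\lambda_1+\lambda_n}(\pi\lambda_n)$, the two bracketed arguments, and the hypotheses supply the details that the paper's one-line proof leaves to the reader.
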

\begin{proof}
Write Proposition \ref{prop:main3} and Proposition \ref{prop:main4} for $\lambda_1,\cdots,\lambda_{n+1}\in (0,1)$, then make $\lambda_{n+1}=\lambda_n$.    
\end{proof}

\section{Comparing the upper bounds}\label{sec:compare}

The aim of this section is to compare, for both families of inequalities,
the two upper bounds produced by the parameter choices of
Section~\ref{sec:main}.  

The following elementary observation is the key ingredient behind all the comparison results in this section.

\begin{lemma}\label{lem:sincompare}
Let $\lambda>0$ and $\sigma,\tau\in(0,\lambda)$.  Then
\[
  \sin\!\Bigl(\frac{\pi\sigma}{\lambda}\Bigr)
  \;\geq\;
  \sin\!\Bigl(\frac{\pi\tau}{\lambda}\Bigr)
  \quad\Longleftrightarrow\quad
  (\sigma-\tau)(\sigma+\tau-\lambda)\;\leq\;0,
\]
with equality if and only if $\sigma=\tau$ or $\sigma+\tau=\lambda$.
\end{lemma}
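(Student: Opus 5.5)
The plan is to use the sum-to-product identity for sines. With $x=\pi\sigma/\lambda$ and $y=\pi\tau/\lambda$, both in $(0,\pi)$,
\[
  \sin x-\sin y \;=\; 2\cos\!\Bigl(\frac{x+y}{2}\Bigr)\sin\!\Bigl(\frac{x-y}{2}\Bigr).
\]
This reduces the comparison to a sign analysis of two factors, each controlled by one of the linear quantities $\sigma-\tau$ and $\sigma+\tau-\lambda$.

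\emph{Sign of the sine factor.} Since $\sigma,\tau\in(0,\lambda)$, we have $(x-y)/2\in(-\pi/2,\pi/2)$. On this interval sine is strictly increasing and vanishes only at $0$. Hence $\sin((x-y)/2)$ has the same sign as $\sigma-\tau$, and it is zero exactly when $\sigma=\tau$.

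\emph{Sign of the cosine factor.} We have $(x+y)/2\in(0,\pi)$. There cosine is strictly decreasing and vanishes only at $\pi/2$. So $\cos((x+y)/2)$ is positive when $\sigma+\tau<\lambda$, zero when $\sigma+\tau=\lambda$, and negative when $\sigma+\tau>\lambda$. In other words, it has the sign of $-(\sigma+\tau-\lambda)$.

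\emph{Combining.} Multiplying the two factors gives
\[
  \operatorname{sign}\bigl(\sin x-\sin y\bigr)\;=\;-\operatorname{sign}\bigl((\sigma-\tau)(\sigma+\tau-\lambda)\bigr).
\]
Therefore $\sin x\ge\sin y$ if and only if $(\sigma-\tau)(\sigma+\tau-\lambda)\le0$. Equality $\sin x=\sin y$ occurs exactly when one of the two factors vanishes, that is, when $\sigma=\tau$ or $\sigma+\tau=\lambda$.

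The only point that needs care is confirming the ranges of the half-sum and half-difference, because they are what guarantee strict monotonicity of each factor. Those ranges follow immediately from $\sigma,\tau\in(0,\lambda)$, so I expect no real obstacle.

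An alternative route is to use the symmetry of $\sin$ about $\pi/2$. One shows that $\sin(\pi s/\lambda)$ is a strictly decreasing function of $|s-\lambda/2|$ on $(0,\lambda)$, and observes that
\[
  (\sigma-\lambda/2)^2-(\tau-\lambda/2)^2=(\sigma-\tau)(\sigma+\tau-\lambda).
\]
This yields the same criterion.
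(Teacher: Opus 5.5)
Your proof is correct, but your main route differs from the paper's. The paper's proof is essentially your "alternative route." It argues that $\sin$ on $(0,\pi)$ is symmetric about $\pi/2$ and peaks there. Hence $\sin(\pi\sigma/\lambda)\ge\sin(\pi\tau/\lambda)$ exactly when $|\sigma-\lambda/2|\le|\tau-\lambda/2|$, and squaring gives the difference-of-squares factorization $(\sigma-\tau)(\sigma+\tau-\lambda)\le 0$.

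Your sum-to-product argument instead factors the difference of the sines themselves. Each trigonometric factor then carries the sign of one linear factor, so the non-strict inequality, the strict inequality and the equality case all follow at once from an identity. You also make explicit the strict monotonicity that the paper's "at least as close to $\pi/2$" step uses without comment, via the ranges $(x-y)/2\in(-\pi/2,\pi/2)$ and $(x+y)/2\in(0,\pi)$.

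Your identity also carries quantitative information. Writing $\cos\frac{x+y}{2}=-\sin\frac{\pi(\sigma+\tau-\lambda)}{2\lambda}$ gives
\[
\sin\frac{\pi\sigma}{\lambda}-\sin\frac{\pi\tau}{\lambda}=-2\,\sin\frac{\pi(\sigma+\tau-\lambda)}{2\lambda}\,\sin\frac{\pi(\sigma-\tau)}{2\lambda}.
\]
This is an exact formula for the gap between the bases of the two bounds, in terms of the same two quantities that appear in Theorem~\ref{thm:compare_n}: $\sigma-\tau=\delta_n$ and $\sigma+\tau-\lambda=\lambda_1(2\lambda_n-1)$. It could serve as a starting point for the question about the size of $|S-T|$ raised in the conclusion, though the $\lambda$-th power would still need handling. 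The paper's distance-to-$\pi/2$ argument is shorter and more geometric, but it only yields the sign.
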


\begin{proof}
Both arguments $\pi\sigma/\lambda$ and $\pi\tau/\lambda$ lie in
$(0,\pi)$, on which $\sin$ achieves its maximum at $\pi/2$ and is
symmetric about that point, with $\sin\alpha=\sin(\pi-\alpha)$.
Thus, $\sin(\pi\sigma/\lambda)\geq\sin(\pi\tau/\lambda)$ if and only
if $\pi\sigma/\lambda$ is at least as close to $\pi/2$ as
$\pi\tau/\lambda$, i.e.\
\[
  \Bigl|\frac{\pi\sigma}{\lambda}-\frac{\pi}{2}\Bigr|
  \;\leq\;
  \Bigl|\frac{\pi\tau}{\lambda}-\frac{\pi}{2}\Bigr|.
\]
Squaring and multiplying through by $(\lambda/\pi)^2>0$, this is
$(\sigma-\lambda/2)^2\leq(\tau-\lambda/2)^2$, i.e.\
\[
  \bigl[(\sigma-\tfrac\lambda2)-(\tau-\tfrac\lambda2)\bigr]
  \bigl[(\sigma-\tfrac\lambda2)+(\tau-\tfrac\lambda2)\bigr]\leq 0,
\]
which simplifies to $(\sigma-\tau)(\sigma+\tau-\lambda)\leq 0$.
The equality holds if and only if $\sigma=\tau$ or $\sigma+\tau=\lambda$. The proof is complete.
\end{proof}

\subsection{The general \texorpdfstring{$n$}{n}-angle case}\label{sec:nangle}

Recall from Propositions~\ref{prop:main3} and~\ref{prop:main4} that
for parameters $\lambda_1,\ldots,\lambda_n\in(0,1)$ the two upper
bounds for the cyclic product
$\bigl(\displaystyle\prod_{i=1}^{n-1}\sin^{\lambda_{i+1}}(\pi\lambda_i)\bigr)
\sin^{\lambda_1}(\pi\lambda_n)$
have arguments (numerators, with common denominator $\lambda:=\sum_i\lambda_i$)
\begin{align*}
  \sigma &:= \sum_{i=1}^{n-1}\lambda_{i+1}(1-\lambda_i)+\lambda_1\lambda_n,\\
  \tau   &:= \sum_{i=1}^{n-1}\lambda_{i+1}\lambda_i +\lambda_1\lambda_n.
\end{align*}

\begin{theorem}\label{thm:compare_n}
Let $\lambda_1,\ldots,\lambda_n\in(0,1)$, $\lambda=\sum_i\lambda_i$,
and let $S$, $T$ denote the upper bounds of
Propositions~\ref{prop:main3} and~\ref{prop:main4} respectively.
Define
\begin{equation}\label{eq:delta_n}
  \delta_n \;:=\; \sum_{i=1}^{n-1}\lambda_{i+1}(1-2\lambda_i).
\end{equation}
Then
\begin{equation}\label{eq:compare_n}
  S \;\geq\; T
  \quad\Longleftrightarrow\quad
  \delta_n\,(2\lambda_n-1) \;\leq\; 0,
\end{equation}
with equality $S=T$ if and only if $\delta_n=0$ or $\lambda_n=\tfrac12$.
\end{theorem}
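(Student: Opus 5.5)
The plan is to reduce the comparison of $S$ and $T$ directly to Lemma~\ref{lem:sincompare}, using the numerators $\sigma$ and $\tau$ defined just before the statement. By Propositions~\ref{prop:main3} and~\ref{prop:main4}, $S=\sin^\lambda(\pi\sigma/\lambda)$ and $T=\sin^\lambda(\pi\tau/\lambda)$. Since $\lambda>0$, the map $x\mapsto x^\lambda$ is strictly increasing on $(0,\infty)$. Hence $S\geq T$ if and only if $\sin(\pi\sigma/\lambda)\geq\sin(\pi\tau/\lambda)$, and likewise for equality, provided both sines are positive.

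\textbf{Step 1: admissibility.} Before applying Lemma~\ref{lem:sincompare}, I will check that $\sigma,\tau\in(0,\lambda)$, which also gives the positivity of the sines. Every summand of $\sigma$ and $\tau$ is a product of numbers in $(0,1)$, so both are strictly positive. For the upper bound, use $\lambda_{i+1}(1-\lambda_i)<\lambda_{i+1}$, $\lambda_{i+1}\lambda_i<\lambda_{i+1}$ and $\lambda_1\lambda_n<\lambda_1$. Summing gives
\[
\sigma,\tau<\sum_{i=1}^{n-1}\lambda_{i+1}+\lambda_1=\lambda .
\]

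\textbf{Step 2: the two factors.} This is the core computation. The difference is
\[
\sigma-\tau=\sum_{i=1}^{n-1}\lambda_{i+1}(1-2\lambda_i)=\delta_n ,
\]
because the terms $\lambda_1\lambda_n$ cancel. For the sum,
\[
\sigma+\tau=\sum_{i=1}^{n-1}\lambda_{i+1}+2\lambda_1\lambda_n=(\lambda-\lambda_1)+2\lambda_1\lambda_n ,
\]
so that $\sigma+\tau-\lambda=\lambda_1(2\lambda_n-1)$. Therefore
\[
(\sigma-\tau)(\sigma+\tau-\lambda)=\lambda_1\,\delta_n\,(2\lambda_n-1).
\]

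\textbf{Step 3: conclusion.} Since $\lambda_1>0$, Lemma~\ref{lem:sincompare} gives $S\geq T$ if and only if $\delta_n(2\lambda_n-1)\leq0$. The equality clause of the same lemma gives $S=T$ if and only if $\sigma=\tau$ or $\sigma+\tau=\lambda$, that is, if and only if $\delta_n=0$ or $\lambda_n=\tfrac12$.

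I expect no real obstacle here. The only points needing care are the telescoping-style simplification of $\sigma+\tau$, where the sum $\sum_{i=1}^{n-1}\lambda_{i+1}$ must be recognised as $\lambda-\lambda_1$, and the strict inclusion $\sigma,\tau\in(0,\lambda)$ required to invoke the lemma.
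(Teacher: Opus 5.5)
Your proposal is correct and follows essentially the same route as the paper: use monotonicity of $t\mapsto t^\lambda$ to reduce to Lemma~\ref{lem:sincompare}, then compute $\sigma-\tau=\delta_n$ and $\sigma+\tau-\lambda=\lambda_1(2\lambda_n-1)$. Your Step~1 check that $\sigma,\tau\in(0,\lambda)$ is a worthwhile addition, since the paper invokes the lemma without verifying this hypothesis.
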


\begin{proof}
Since $t\mapsto t^\lambda$ is increasing for $t>0$, $S\geq T$ if and only if
$\sin(\pi\sigma/\lambda)\geq\sin(\pi\tau/\lambda)$.
By Lemma~\ref{lem:sincompare}, this is equivalent to
$(\sigma-\tau)(\sigma+\tau-\lambda)\leq 0$.

\medskip
Let us compute $\sigma-\tau$ and $\sigma+\tau-\lambda$. We have
\begin{equation}\label{eq:sigmatau_n}
  \sigma-\tau
  =\sum_{i=1}^{n-1}\lambda_{i+1}(1-\lambda_i)-\sum_{i=1}^{n-1}\lambda_{i+1}\lambda_i
  =\sum_{i=1}^{n-1}\lambda_{i+1}(1-2\lambda_i)
  =\delta_n.
\end{equation}

\medskip
We also obtain
\begin{align*}
  \sigma+\tau
  &=\sum_{i=1}^{n-1}\lambda_{i+1}(1-\lambda_i)+\sum_{i=1}^{n-1}\lambda_{i+1}\lambda_i
   +2\lambda_1\lambda_n
  =\sum_{i=1}^{n-1}\lambda_{i+1}+2\lambda_1\lambda_n\\
  &=(\lambda-\lambda_1)+2\lambda_1\lambda_n,
\end{align*}
since $\sum_{i=2}^n\lambda_i=\lambda-\lambda_1$.  Therefore,
\begin{equation}\label{eq:sigmasum_n}
  \sigma+\tau-\lambda = \lambda_1(2\lambda_n-1).
\end{equation}

\medskip
Substituting \eqref{eq:sigmatau_n} and \eqref{eq:sigmasum_n} into
the criterion of Lemma~\ref{lem:sincompare} gives
\[
  \delta_n\cdot\lambda_1(2\lambda_n-1)\leq 0.
\]
Since $\lambda_1>0$, this is equivalent to $\delta_n(2\lambda_n-1)\leq 0$,
which is \eqref{eq:compare_n}.

The equality $S=T$ holds if and only if $\delta_n=0$ (i.e. $\sigma=\tau$) or
$2\lambda_n-1=0$, i.e. $\lambda_n=\tfrac12$.  

The proof is complete.
\end{proof}

\subsection{The general \texorpdfstring{$2n$}{2n}-angle case}\label{sec:2nangle}

Recall from Propositions~\ref{prop:main1} and~\ref{prop:main2} that
for $\lambda_1,\ldots,\lambda_{2n}\in(0,1)$ the two bounds for the
paired product
$\displaystyle\prod_{i=1}^{n}\sin^{1-\lambda_{2i}}(\pi\lambda_{2i-1})
\sin^{\lambda_{2i-1}}(\pi\lambda_{2i})$
have total weight $\lambda=\displaystyle\sum_{i=1}^n[(1-\lambda_{2i})+\lambda_{2i-1}]$
and arguments
\begin{align*}
  \sigma_P &:= \sum_{i=1}^{n}\lambda_{2i-1},\\
  \sigma_Q &:= 2\sum_{i=1}^{n}\lambda_{2i-1}(1-\lambda_{2i}).
\end{align*}

\begin{theorem}\label{thm:compare_2n}
Let $\lambda_1,\ldots,\lambda_{2n}\in(0,1)$, and let $P$, $Q$ denote
the bounds of Propositions~\ref{prop:main1} and~\ref{prop:main2}
respectively.  Define
\begin{equation}\label{eq:Delta_n}
  \Delta_n := \sum_{i=1}^{n}\lambda_{2i-1}(2\lambda_{2i}-1),
  \qquad
  \Sigma_n := \sum_{i=1}^{n}(1-\lambda_{2i})(2\lambda_{2i-1}-1).
\end{equation}
Then
\begin{equation}\label{eq:compare_2n}
  P \;\geq\; Q
  \quad\Longleftrightarrow\quad
  \Delta_n\,\Sigma_n \;\leq\; 0,
\end{equation}
with equality $P=Q$ if and only if $\Delta_n=0$ or $\Sigma_n=0$.
\end{theorem}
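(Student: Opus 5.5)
Following the proof of Theorem~\ref{thm:compare_n}, we first reduce the comparison to the arguments of the sines. Since $t\mapsto t^\lambda$ is increasing on $(0,\infty)$, $P\geq Q$ holds if and only if $\sin(\pi\sigma_P/\lambda)\geq\sin(\pi\sigma_Q/\lambda)$. To apply Lemma~\ref{lem:sincompare}, we need $\sigma_P,\sigma_Q\in(0,\lambda)$. For $\sigma_P$ this is clear, since $\lambda-\sigma_P=\sum_i(1-\lambda_{2i})>0$. For $\sigma_Q$ it is not automatic: $\sigma_Q$ could exceed $\lambda$. We would handle this by working directly with the equivalent form $(\sigma_P-\lambda/2)^2\leq(\sigma_Q-\lambda/2)^2$. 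That form is valid whenever both angles lie in $[0,\pi]$, and we can check this for $\sigma_Q$ via AM--GM. Indeed, with $x_i=\lambda_{2i-1}$ and $y_i=1-\lambda_{2i}$ we have $2x_iy_i\leq x_i+y_i$, because $2xy\leq x+y$ for $x,y\in(0,1)$ (as $x(1-y)+y(1-x)\geq0$). Hence $0<\sigma_Q\leq\lambda$, and the argument of Lemma~\ref{lem:sincompare} goes through verbatim, boundary included. So the criterion is
\[
(\sigma_P-\sigma_Q)(\sigma_P+\sigma_Q-\lambda)\leq 0.
\]

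Next, we compute the two factors. With $x_i,y_i$ as above,
\[
\sigma_P-\sigma_Q=\sum_i x_i-2\sum_i x_iy_i=\sum_i x_i(1-2y_i)=\sum_i\lambda_{2i-1}(2\lambda_{2i}-1)=\Delta_n .
\]
Also, $\lambda=\sum_i(x_i+y_i)$, so
\[
\sigma_P+\sigma_Q-\lambda=\sum_i\bigl(2x_iy_i-y_i\bigr)=\sum_i y_i(2x_i-1)=\sum_i(1-\lambda_{2i})(2\lambda_{2i-1}-1)=\Sigma_n .
\]
Substituting these into the criterion gives $P\geq Q\iff\Delta_n\Sigma_n\leq0$. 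The equality case $P=Q$ follows from the equality statement of Lemma~\ref{lem:sincompare}: $\sigma_P=\sigma_Q$ (that is, $\Delta_n=0$) or $\sigma_P+\sigma_Q=\lambda$ (that is, $\Sigma_n=0$).

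The algebra is routine. The only point needing care is the range condition $\sigma_Q\in(0,\lambda)$, which Lemma~\ref{lem:sincompare} requires as a hypothesis. The AM--GM observation $2x_iy_i\leq x_i+y_i$ settles it. Equality $\sigma_Q=\lambda$ would force $x_i(1-y_i)+y_i(1-x_i)=0$ for every $i$, which is impossible for parameters in $(0,1)$. Hence $\sigma_Q<\lambda$ strictly, and Lemma~\ref{lem:sincompare} applies directly.
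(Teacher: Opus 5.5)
Your proof is correct and follows the same route as the paper: you reduce $P\geq Q$ to a comparison of the sines via monotonicity of $t\mapsto t^\lambda$, apply Lemma~\ref{lem:sincompare}, and identify $\sigma_P-\sigma_Q=\Delta_n$ and $\sigma_P+\sigma_Q-\lambda=\Sigma_n$, with the substitution $x_i=\lambda_{2i-1}$, $y_i=1-\lambda_{2i}$ simply streamlining the paper's computation. Your check that $0<\sigma_Q<\lambda$, via $x_i+y_i-2x_iy_i=x_i(1-y_i)+y_i(1-x_i)>0$, verifies a hypothesis of Lemma~\ref{lem:sincompare} that the paper's proof uses without comment, so it adds rigor rather than changing the argument.
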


\begin{proof}
Again $P\geq Q$ if and only if $\sin(\pi\sigma_P/\lambda)\geq\sin(\pi\sigma_Q/\lambda)$,
which by Lemma~\ref{lem:sincompare} is
$(\sigma_P-\sigma_Q)(\sigma_P+\sigma_Q-\lambda)\leq 0$.

\medskip
\noindent\textbf{Computing $\sigma_P-\sigma_Q$.}
\begin{align*}
  \sigma_P-\sigma_Q
  &=\sum_{i=1}^n\lambda_{2i-1}-2\sum_{i=1}^n\lambda_{2i-1}(1-\lambda_{2i})
  =\sum_{i=1}^n\lambda_{2i-1}\bigl[1-2(1-\lambda_{2i})\bigr]\\
  &=\sum_{i=1}^n\lambda_{2i-1}(2\lambda_{2i}-1)
  =\Delta_n.
\end{align*}

\medskip
\noindent\textbf{Computing $\sigma_P+\sigma_Q-\lambda$.}
Write $S_{\mathrm{odd}}:=\sum_{i=1}^n\lambda_{2i-1}$ and
$S_{\mathrm{even}}:=\sum_{i=1}^n\lambda_{2i}$, so that
$\lambda=n-S_{\mathrm{even}}+S_{\mathrm{odd}}$.  Then
$\sigma_P+\sigma_Q=S_{\mathrm{odd}}+2\sum_i\lambda_{2i-1}(1-\lambda_{2i})
=3S_{\mathrm{odd}}-2\sum_i\lambda_{2i-1}\lambda_{2i}$, and
\begin{align*}
  \sigma_P+\sigma_Q-\lambda
  &=3S_{\mathrm{odd}}-2\sum_i\lambda_{2i-1}\lambda_{2i}
    -(n-S_{\mathrm{even}}+S_{\mathrm{odd}})\\
  &=\sum_{i=1}^n\bigl[2\lambda_{2i-1}+\lambda_{2i}-1
    -2\lambda_{2i-1}\lambda_{2i}\bigr]\\
  &=\sum_{i=1}^n\bigl[2\lambda_{2i-1}(1-\lambda_{2i})-(1-\lambda_{2i})\bigr]\\
  &=\sum_{i=1}^n(1-\lambda_{2i})(2\lambda_{2i-1}-1)
  =\Sigma_n.
\end{align*}

\medskip
Thus, the criterion $(\sigma_P-\sigma_Q)(\sigma_P+\sigma_Q-\lambda)\leq 0$
becomes $\Delta_n\,\Sigma_n\leq 0$, which is \eqref{eq:compare_2n}. The proof is complete.
\end{proof}

\subsection{The two-dimensional case}\label{sec:2D}

We now restrict Theorem~\ref{thm:compare_n} to the case $n=2$.
The following is obtained by combining
Propositions~\ref{prop:main1}--\ref{prop:main2} (or equivalently
Propositions~\ref{prop:main3}--\ref{prop:main4}) for $n=2$.

\begin{corollary}\label{cor:main12}
For $0<x<1$ and $0<y<1$, define
\[
  A(x,y):=\Bigl(\sin\tfrac{\pi y}{1-x+y}\Bigr)^{\!1-x+y},
  \qquad
  B(x,y):=\Bigl(\sin\tfrac{2\pi y(1-x)}{1-x+y}\Bigr)^{\!1-x+y}.
\]
Then
\begin{equation}\label{eq:main12}
  \bigl(\sin(\pi x)\bigr)^y\,\bigl(\sin(\pi y)\bigr)^{1-x}
  \;\leq\;\min\{A(x,y),\,B(x,y)\}.
\end{equation}
\end{corollary}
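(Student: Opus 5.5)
The plan is to specialize Propositions~\ref{prop:main1} and~\ref{prop:main2} to $k=1$, taking $\lambda_1=x$ and $\lambda_2=y$. With this choice the left-hand side of both \eqref{eq:propmain1} and \eqref{eq:propmain2} is $\sin^{1-y}(\pi x)\sin^{x}(\pi y)$, and the total weight is $\lambda=1-y+x$. This is the target left-hand side with the roles of $x$ and $y$ swapped. So the cleaner route is to take $\lambda_1=y$ and $\lambda_2=x$. Then the left-hand side becomes
\[
\sin^{1-x}(\pi y)\,\sin^{y}(\pi x),
\]
which is exactly the product in \eqref{eq:main12}, and the weight becomes $\lambda=(1-x)+y$, matching the exponent $1-x+y$ in $A$ and $B$.

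Next I read off the two arguments. From \eqref{eq:propmain1}, the argument is $\pi\lambda_1/\lambda=\pi y/(1-x+y)$, so the right-hand side is precisely $A(x,y)$. From \eqref{eq:propmain2}, the argument is $2\pi\lambda_1(1-\lambda_2)/\lambda=2\pi y(1-x)/(1-x+y)$, so the right-hand side is precisely $B(x,y)$.

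Since the same left-hand side is bounded by both $A(x,y)$ and $B(x,y)$, it is bounded by their minimum, which gives \eqref{eq:main12}.

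The only point needing care is the hypothesis check. Corollary~\ref{cor:main}, on which these propositions rest, requires weights $1/p_i$ with $p_i>1$, i.e.\ $1/p_i\in(0,1)$. Here $1/p_1=1-x$ and $1/p_2=y$, and both lie in $(0,1)$ because $x,y\in(0,1)$. The arguments $a_i$ are $y$ and $x$ in the first proposition, and $y$ and $1-x$ in the second; all of these lie in $(0,1)$ as well. The identity $\sin(\pi(1-x))=\sin(\pi x)$ is already absorbed in the proof of Proposition~\ref{prop:main2}. I expect no real obstacle, only the bookkeeping of the index assignment.
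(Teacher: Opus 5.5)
Your proposal is correct and matches the paper, which obtains this corollary directly from Propositions~\ref{prop:main1} and~\ref{prop:main2} with a single pair of parameters. Your assignment $\lambda_1=y$, $\lambda_2=x$ is exactly the one that reproduces the stated left-hand side, the weight $1-x+y$, and the two bounds $A(x,y)$ and $B(x,y)$, and your hypothesis check on the weights $1-x,\,y\in(0,1)$ is right.
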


Setting $x=1-ky$, $k\in\mathbb{N}$, the above reduces to the following.

\begin{corollary}\label{cor:main121}
For $0<y<\frac{1}{k}$ with $k\in\mathbb{N}$, we have
\begin{equation}\label{eq:cormain121}
  \sin(k\pi y)\,\sin^k(\pi y)
  \;\leq\; \begin{cases}
   \sin^{k+1}\!\bigl(\tfrac {\pi }{k+1}\bigr) &  \text{if }\; y\in (0,\frac{1}{2k}],\\[4pt]
        \sin^{k+1}\!\bigl(\tfrac{2\pi ky}{k+1}\bigr) & \text{if }\;y\in [\frac{1}{2k},\frac{1}{k}).
 \end{cases}
\end{equation}
\end{corollary}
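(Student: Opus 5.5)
The plan is to specialize Corollary~\ref{cor:main12} to the line $x=1-ky$ and then take a $y$-th root. The only care needed is in the admissibility conditions and in deciding which of $A$, $B$ to keep on each subinterval.

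\textbf{Step 1: admissibility and substitution.} For $0<y<1/k$, the point $x=1-ky$ lies in $(0,1)$, and $y\in(0,1)$. So Corollary~\ref{cor:main12} applies. The common exponent and denominator become $1-x+y=ky+y=(k+1)y$. Hence
\[
A(1-ky,y)=\sin^{(k+1)y}\!\Bigl(\tfrac{\pi}{k+1}\Bigr),\qquad
B(1-ky,y)=\sin^{(k+1)y}\!\Bigl(\tfrac{2\pi ky}{k+1}\Bigr),
\]
where for $B$ we used $2y(1-x)/((k+1)y)=2ky/(k+1)$. On the left, $\sin(\pi x)=\sin(\pi-k\pi y)=\sin(k\pi y)$. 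Therefore the left side of \eqref{eq:main12} equals $\bigl[\sin(k\pi y)\sin^k(\pi y)\bigr]^y$.

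All quantities are nonnegative: $k\pi y\in(0,\pi)$, and $2\pi ky/(k+1)\in(0,\pi)$ since $2ky<2\le k+1$. Because $t\mapsto t^{1/y}$ is increasing on $[0,\infty)$, taking $y$-th roots gives
\[
\sin(k\pi y)\sin^k(\pi y)\le \min\Bigl\{\sin^{k+1}\!\Bigl(\tfrac{\pi}{k+1}\Bigr),\ \sin^{k+1}\!\Bigl(\tfrac{2\pi ky}{k+1}\Bigr)\Bigr\}.
\]

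\textbf{Step 2: the case split.} Since the minimum is bounded by each of its two entries, both displayed bounds in \eqref{eq:cormain121} hold on the whole interval $(0,1/k)$. In particular each holds on its stated subinterval, which already proves the corollary as stated.

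\textbf{Step 3: which bound is the minimum.} To justify the split at $y=1/(2k)$, I would apply Lemma~\ref{lem:sincompare} with $\lambda=k+1$, $\sigma=1$, $\tau=2ky$, both in $(0,k+1)$. This gives
\[
\sin\tfrac{\pi}{k+1}\ge\sin\tfrac{2\pi ky}{k+1}\iff (1-2ky)(2ky-k)\le 0\iff k(1-2ky)(2y-1)\le 0 .
\]
For $k\ge2$ we have $y<1/2$, so this reduces to $y\le 1/(2k)$. The endpoint $y=1/(2k)$ is exactly the equality case $\sigma=\tau$, which explains the threshold in \eqref{eq:cormain121}. For $k=1$ the first bound is just $1$, and the comparison is immediate.

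\textbf{Main obstacle.} The only delicate point is the orientation of this comparison. I expect it to show that on $(0,1/(2k)]$ the bound involving $2ky$ is the smaller one. If so, the sharper reading is obtained by swapping the two cases, while the corollary as written remains true by Step~2. I would double-check this sign carefully, for instance numerically at $k=2$, $y=0.1$, before writing it up.
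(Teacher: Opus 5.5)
Your proposal is correct and follows the paper's route. Substituting $x=1-ky$ into Corollary~\ref{cor:main12} and taking $y$-th roots gives both bounds on all of $(0,\tfrac1k)$, so the statement as written follows exactly as in your Step~2. Your Step~3 orientation is also right, and the numerical double-check is unnecessary: for $y\le\tfrac{1}{2k}$ one has $\tfrac{2\pi ky}{k+1}\le\tfrac{\pi}{k+1}\le\tfrac{\pi}{2}$, so the $2ky$-bound is the minimum there, while the constant bound is the minimum on $[\tfrac{1}{2k},\tfrac1k)$ for $k\ge2$. Hence, for $k\ge 2$, the displayed split attaches the weaker bound to each piece. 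This is consistent with the reversed inequalities in Corollary~\ref{cor:compare2D}: Theorem~\ref{thm:compare_2n} with $\lambda_1=y$, $\lambda_2=x$ actually gives $A\ge B$ iff $(x-\tfrac12)(y-\tfrac12)\le 0$.
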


A particular case worth noting is
\begin{equation}\label{eq:ysinineq}
  \sin(2\pi y)\,\sin^2(\pi y)\leq
  \tfrac{3\sqrt3}{8}, \qquad 0< y<\frac{1}{4}.
\end{equation}

The comparison between $A$ and $B$ is the instance $n=2$ of
Theorem~\ref{thm:compare_n}: with $\lambda_1=x$, $\lambda_2=y$,
one has $\delta_2=\lambda_2(1-2\lambda_1)=y(1-2x)$ and
$2\lambda_2-1=2y-1$.  For the $2n$ family with $n=1$, Theorem~\ref{thm:compare_2n}
gives the same result via $\Delta_1=x(2y-1)$ and $\Sigma_1=(1-y)(2x-1)$.
We record the explicit statement.

\begin{corollary}\label{cor:compare2D}
For $0<x,y<1$, the following holds.
\[
  \begin{cases}
    A(x,y) < B(x,y) & \text{if }\;(x-\tfrac12)(y-\tfrac12)<0,\\[4pt]
    A(x,y) = B(x,y) & \text{if }\;x=\tfrac12\text{ or }y=\tfrac12,\\[4pt]
    A(x,y) > B(x,y) & \text{if }\;(x-\tfrac12)(y-\tfrac12)>0.
  \end{cases}
\]
\end{corollary}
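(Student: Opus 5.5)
The plan is to reduce the comparison of $A$ and $B$ to Lemma~\ref{lem:sincompare}, exactly as in the proof of Theorem~\ref{thm:compare_2n} with one pair. Both $A(x,y)$ and $B(x,y)$ carry the same exponent $\lambda:=1-x+y>0$. Since $t\mapsto t^\lambda$ is strictly increasing on $(0,\infty)$, comparing $A$ with $B$ is the same as comparing $\sin(\pi\sigma/\lambda)$ with $\sin(\pi\tau/\lambda)$, where
\[
\sigma:=y,\qquad \tau:=2y(1-x).
\]

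The first step is to check the hypotheses of Lemma~\ref{lem:sincompare}, namely $\sigma,\tau\in(0,\lambda)$. Clearly $0<y<1-x+y$. Also $2y(1-x)<1-x+y$ is equivalent to $(1-x)(2y-1)<y$. This holds because the left side is negative when $y\le\frac12$, and is less than $2y-1<y$ when $y>\frac12$.

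The second step computes the two factors:
\[
\sigma-\tau=y(2x-1),\qquad \sigma+\tau-\lambda=3y-2xy-1+x-y=(1-x)(2y-1).
\]
Hence
\[
(\sigma-\tau)(\sigma+\tau-\lambda)=4y(1-x)\bigl(x-\tfrac12\bigr)\bigl(y-\tfrac12\bigr).
\]
Since $y(1-x)>0$, its sign is the sign of $(x-\tfrac12)(y-\tfrac12)$. The same identities follow from Theorem~\ref{thm:compare_2n} with $n=1$, taking $\lambda_1=y$ and $\lambda_2=x$. Then $\Delta_1=y(2x-1)$ and $\Sigma_1=(1-x)(2y-1)$, and $A,B$ are respectively $P,Q$.

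The third step reads off the three cases from Lemma~\ref{lem:sincompare}, using its strict form. The strict inequality $\sin(\pi\sigma/\lambda)>\sin(\pi\tau/\lambda)$ holds exactly when $|\sigma-\lambda/2|<|\tau-\lambda/2|$, that is, when $(\sigma-\tau)(\sigma+\tau-\lambda)<0$. Equality occurs exactly when $x=\frac12$ or $y=\frac12$, which gives the middle line of the statement directly.

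The main obstacle is the orientation of the strict cases, and here I expect the plan to fail for the statement as written. The computation above gives $A>B$ precisely when $(x-\tfrac12)(y-\tfrac12)<0$, and $A<B$ when it is $>0$. A numerical check confirms this. At $x=\frac14$, $y=\frac34$ one has $\lambda=\frac32$, so $A=(\sin\frac{\pi}{2})^{3/2}=1$ while $B=(\sin\frac{3\pi}{4})^{3/2}<1$, giving $A>B$. At $x=y=\frac14$ one has $\lambda=1$, $A=\sin\frac{\pi}{4}<\sin\frac{3\pi}{8}=B$. So the strict lines of Corollary~\ref{cor:compare2D} do not follow from Lemma~\ref{lem:sincompare}; they are contradicted by it.

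Before writing the final argument I would recheck two things. First, that the labels $A$ and $B$ match the arguments $y/\lambda$ and $2y(1-x)/\lambda$ as defined in Corollary~\ref{cor:main12}. Second, the direction of Lemma~\ref{lem:sincompare}, namely that closeness to $\pi/2$ means a larger sine. If both checks stand, only the equality line and the dichotomy ``$A\ne B$ iff $(x-\frac12)(y-\frac12)\neq0$'' are provable as stated. The two strict lines would then hold with the inequality signs between $A$ and $B$ exchanged.
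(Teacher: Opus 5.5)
Your analysis is correct: the corollary as printed has its two strict lines interchanged, and your argument proves the corrected version. With $\lambda=1-x+y$, $\sigma=y$, $\tau=2y(1-x)$ you get $(\sigma-\tau)(\sigma+\tau-\lambda)=4y(1-x)(x-\tfrac12)(y-\tfrac12)$. Lemma~\ref{lem:sincompare} then gives:
\begin{itemize}
\item $A>B$ exactly when $(x-\tfrac12)(y-\tfrac12)<0$;
\item $A<B$ exactly when $(x-\tfrac12)(y-\tfrac12)>0$;
\item $A=B$ exactly when $x=\tfrac12$ or $y=\tfrac12$.
\end{itemize}
Both of your planned rechecks hold up. The labels in Corollary~\ref{cor:main12} do attach $\pi y/\lambda$ to $A$ and $2\pi y(1-x)/\lambda$ to $B$, and being closer to $\pi/2$ does mean a larger sine. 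Your test points $(\tfrac14,\tfrac34)$ and $(\tfrac14,\tfrac14)$ are computed correctly.

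The paper itself confirms your orientation. On the diagonal, $A(x,x)=\sin(\pi x)$ and $B(x,x)=\sin(2\pi x(1-x))$. Corollary~\ref{cor:diagonal} asserts $A\le B$ there, with equality only at $x=\tfrac12$. So $A<B$ wherever $(x-\tfrac12)^2>0$, which contradicts the third line of the statement.

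The paper's proof uses the same mechanism, Lemma~\ref{lem:sincompare} via Theorem~\ref{thm:compare_n}, but makes two slips. First, it substitutes $\lambda_1=x$, $\lambda_2=y$, which produces the product $\sin^{y}(\pi x)\sin^{x}(\pi y)$ with weight $x+y$, not $A$ and $B$. Second, it identifies $S=B$ and $T=A$. The correct substitution is $\lambda_1=1-x$, $\lambda_2=y$. That gives $S=A$, $T=B$ and $\delta_2=y(2x-1)$, hence $A\ge B$ iff $(x-\tfrac12)(y-\tfrac12)\le0$. Equivalently, use your choice $\lambda_1=y$, $\lambda_2=x$ in Theorem~\ref{thm:compare_2n}, not the paper's $\Delta_1=x(2y-1)$, $\Sigma_1=(1-y)(2x-1)$.

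The two slips cancel. The paper's final line, ``$B\ge A$ iff $(x-\tfrac12)(y-\tfrac12)\ge0$'', is actually true and agrees with you. It is the opposite of the displayed statement, even though the paper calls it ``the stated result.''

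So do not try to prove the corollary as written. State and prove the version with the strict signs exchanged; your proposal already does this completely, including the check that $\sigma,\tau\in(0,\lambda)$. One downstream note: the proof of Corollary~\ref{cor:diagonal} wrongly claims $\min\{A,B\}=B$. Its conclusion still holds, because the left side there equals $\sin(\pi x)$ and is bounded by $B$ directly from Corollary~\ref{cor:main12}.
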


\begin{proof}
With $\lambda_1=x$, $\lambda_2=y$, and $n=2$, Theorem~\ref{thm:compare_n}
gives $S\geq T$ if and only if $\delta_2(2\lambda_2-1)\leq 0$, where
$\delta_2=y(1-2x)$.  This becomes $y(1-2x)(2y-1)\leq 0$.
Since $y>0$, this reduces to $(2x-1)(2y-1)\geq 0$, that is
$(x-\tfrac12)(y-\tfrac12)\geq 0$.  Here $S=B$ and $T=A$ in the
notation of Corollary~\ref{cor:main12}, so $B\geq A$ if and only if
$(x-\tfrac12)(y-\tfrac12)\geq 0$, which is the stated result. The proof is complete.
\end{proof}

\begin{multicols}{2} 
    \noindent 
    \begin{minipage}{\columnwidth}
        \centering
        \includegraphics[width=\linewidth]{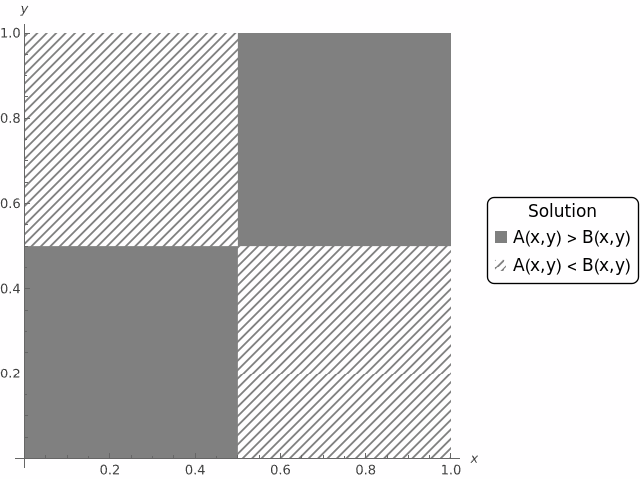}
        \captionof{figure}{$B(x,y)$ is sharper in the gray region, whereas $A(x,y)$ is sharper in the striped region.}
        \label{fig:image1}
    \end{minipage}

    \columnbreak
   
    Mathematica Code:
    \begin{verbatim}
    RegionPlot[{(x-1/2)(y-1/2) > 0,
    (x-1/2)(y-1/2) < 0}, {x, 0, 1},
    {y, 0, 1}, PlotStyle -> {Gray, 
    Directive[Gray, HatchFilling
    [Pi/4, 1.5]]}, BoundaryStyle 
    -> None, PlotLegends -> 
    SwatchLegend[Automatic, {"A(x,y)
    >B(x,y)", "A(x,y)<B(x,y)"}, 
    LegendFunction -> "Frame",
    LegendLabel -> "Solution"], 
    Frame -> False, Axes -> True,
    AxesLabel -> Automatic]
    \end{verbatim}
\end{multicols}

We have the following reduction of the above result.
\begin{corollary}\label{cor:diagonal}
For $0<x<1$,
\begin{equation}\label{eq:diagonal}
  \sin(\pi x) \;\leq\; \sin\!\bigl(2\pi x(1-x)\bigr),
\end{equation}
with equality if and only if $x=\tfrac12$.
\end{corollary}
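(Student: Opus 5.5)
The plan is to apply Lemma~\ref{lem:sincompare} directly with $\lambda=1$. This is the diagonal $y=x$ of the two bounds in Corollary~\ref{cor:main12}: there $1-x+y=1$, so $A(x,x)=\sin(\pi x)$ and $B(x,x)=\sin(2\pi x(1-x))$. The claim is therefore $A(x,x)\le B(x,x)$ on the diagonal. I would not cite the case list of Corollary~\ref{cor:compare2D} verbatim. Its displayed statement and the last line of its proof give opposite orientations, and a numerical check at $x=0.1$ (where $\sin(0.1\pi)\approx 0.309<\sin(0.18\pi)\approx 0.536$) agrees with the proof's conclusion, $B\ge A$ iff $(x-\tfrac12)(y-\tfrac12)\ge 0$. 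To avoid that ambiguity I will compute with the lemma myself.

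Set $\lambda=1$, $\sigma=2x(1-x)$ and $\tau=x$.
\begin{itemize}
\item \textbf{Admissibility.} For $0<x<1$ we have $\tau\in(0,1)$, and $0<\sigma=2x(1-x)\le\tfrac12<1$. So the hypotheses of Lemma~\ref{lem:sincompare} hold.
\item \textbf{The two factors.} First, $\sigma-\tau=x(1-2x)$. Second, $\sigma+\tau-1=-2x^2+3x-1=(2x-1)(1-x)$.
\item \textbf{Sign of the product.} Hence
\[
(\sigma-\tau)(\sigma+\tau-1)=-x(1-x)(2x-1)^2\le 0 .
\]
By Lemma~\ref{lem:sincompare}, this gives $\sin(\pi\sigma)\ge\sin(\pi\tau)$, which is exactly \eqref{eq:diagonal}.
\end{itemize}

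For the equality case, Lemma~\ref{lem:sincompare} says equality holds iff $\sigma=\tau$ or $\sigma+\tau=1$.
\begin{itemize}
\item $\sigma=\tau$ means $x(1-2x)=0$, which on $(0,1)$ forces $x=\tfrac12$.
\item $\sigma+\tau=1$ means $(2x-1)(1-x)=0$, which on $(0,1)$ again forces $x=\tfrac12$.
\end{itemize}
So equality holds only at $x=\tfrac12$, and indeed both sides equal $1$ there.

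The only delicate point is the orientation issue noted above; otherwise the argument is a two-line factorisation. One could also present it as the diagonal specialisation of Theorem~\ref{thm:compare_n} with $n=2$ and $\lambda_1=\lambda_2=x$. There $S=B$, $T=A$, and $\delta_2(2\lambda_2-1)=-x(2x-1)^2\le 0$, which gives $S\ge T$, i.e.\ $B\ge A$. This is the same conclusion.
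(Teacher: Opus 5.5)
Your argument is correct. The admissibility check, the factorisation $(\sigma-\tau)(\sigma+\tau-1)=-x(1-x)(2x-1)^2$, and the equality analysis via Lemma~\ref{lem:sincompare} all hold. Your reading of the orientation ($B\ge A$ iff $(x-\tfrac12)(y-\tfrac12)\ge 0$) is also the right one.

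The paper argues differently. It sets $y=x$ in Corollary~\ref{cor:main12} to get $\sin(\pi x)\le\min\{A(x,x),B(x,x)\}$. This already gives the non-strict inequality, because $\min\{A,B\}\le B$. The step is log-concavity of $t\mapsto\sin(\pi t)$ at the points $x$ and $1-x$ with weights $1-x$ and $x$, whose weighted mean is $2x(1-x)$. It then invokes Corollary~\ref{cor:compare2D} and, following its mis-oriented display, asserts $B\le A$ for all $x$. That claim is false. Combined with $A=\sin(\pi x)\le\min\{A,B\}$, it would force $A\equiv B$. The argument can be rescued: the inequality never needed that claim, and strictness follows from the orientation-independent fact that $A=B$ only at $x=\tfrac12$.

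Your route bypasses both the Gamma-function bound and the comparison corollary. Since $A(x,x)$ is the left-hand side itself, the correctly oriented comparison is the whole statement, and you prove it, equality case included, directly from the elementary lemma. The paper's framing explains where $2x(1-x)$ comes from. Yours is self-contained and does not depend on the mis-stated corollary.

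One correction to your closing remark. With $\lambda_1=\lambda_2=x$ in Propositions~\ref{prop:main3} and~\ref{prop:main4}, the product is $\sin^{2x}(\pi x)$ and $\lambda=2x$. Then $S=\sin^{2x}(\pi/2)=1$ and $T=\sin^{2x}(\pi x)$, so $S\neq B$ and $T\neq A$. The matching sign is accidental, and the same misidentification occurs in the paper's proof of Corollary~\ref{cor:compare2D}.

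The correct specialisation is $\lambda_1=1-x$, $\lambda_2=x$. This gives $\lambda=1$, $S=A$, $T=B$ and $\delta_2(2\lambda_2-1)=x(2x-1)^2\ge 0$, hence $S\le T$ with equality iff $x=\tfrac12$. Alternatively, Theorem~\ref{thm:compare_2n} with $n=1$ and $\lambda_1=\lambda_2=x$ gives $P=A$, $Q=B$ and $\Delta_1\Sigma_1=x(1-x)(2x-1)^2\ge 0$. None of this affects your main proof.
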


\begin{proof}
Set $y=x$ in Corollary~\ref{cor:main12}.  The left-hand side becomes
\[
  \bigl(\sin(\pi x)\bigr)^x\,\bigl(\sin(\pi x)\bigr)^{1-x}
  = \sin(\pi x).
\]
The two bounds reduce to
\[
  A(x,x) = \Bigl(\sin\tfrac{\pi x}{1}\Bigr)^{\!1} = \sin(\pi x),
  \qquad
  B(x,x) = \Bigl(\sin\tfrac{2\pi x(1-x)}{1}\Bigr)^{\!1}
           = \sin\!\bigl(2\pi x(1-x)\bigr),
\]
so Corollary~\ref{cor:main12} gives
$\sin(\pi x)\leq\min\{\sin(\pi x),\,\sin(2\pi x(1-x))\}$.
By Corollary~\ref{cor:compare2D} with $y=x$, we have $B\leq A$ if and only if
$(x-\tfrac12)^2\geq 0$, which holds for all $x$, with equality only
at $x=\tfrac12$.  Hence $\min\{A,B\}=B$ for $x\neq\tfrac12$, giving
\eqref{eq:diagonal}; at $x=\tfrac12$ both sides equal $1$.

\end{proof}

\subsection{The three-dimensional case}\label{sec:3D}

We now restrict Theorem~\ref{thm:compare_n} to the case $n=3$.
The following is obtained by combining
Propositions~\ref{prop:main3} and~\ref{prop:main4} with $n=3$.

\begin{corollary}\label{cor:main34}
For $0<x,y,z<1$, define
\begin{align*}
  S(x,y,z)
  &:=\Bigl(\sin\tfrac\pi{x+y+z}\bigl[y(1-x)+z(1-y)+xz\bigr]
    \Bigr)^{\!x+y+z},\\
  T(x,y,z)
  &:=\Bigl(\sin\tfrac\pi{x+y+z}\bigl[xy+yz+xz\bigr]
    \Bigr)^{\!x+y+z}.
\end{align*}
Then
\begin{equation}\label{eq:main34}
  \bigl(\sin(\pi x)\bigr)^y\,
  \bigl(\sin(\pi y)\bigr)^z\,
  \bigl(\sin(\pi z)\bigr)^x
  \;\leq\;\min\{S(x,y,z),\,T(x,y,z)\}.
\end{equation}
\end{corollary}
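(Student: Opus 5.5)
This is the case $n=3$ of Propositions~\ref{prop:main3} and~\ref{prop:main4}. I will set $\lambda_1=x$, $\lambda_2=y$, $\lambda_3=z$, so that $\lambda=x+y+z$; the plan is to check that both propositions reproduce the left-hand side of \eqref{eq:main34} and that their right-hand sides are exactly $S$ and $T$.

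\textbf{Left-hand side.} For $n=3$ the cyclic product in \eqref{eq:propmain3} is
\[
\Bigl(\prod_{i=1}^{2}\sin^{\lambda_{i+1}}(\pi\lambda_i)\Bigr)\sin^{\lambda_1}(\pi\lambda_3)=\sin^{y}(\pi x)\,\sin^{z}(\pi y)\,\sin^{x}(\pi z).
\]
This is precisely the left-hand side of \eqref{eq:main34}, and \eqref{eq:propmain4} has the same left-hand side.

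\textbf{The bound $S$.} The bracket in \eqref{eq:propmain3} is
\[
\lambda_2(1-\lambda_1)+\lambda_3(1-\lambda_2)+\lambda_1\lambda_3=y(1-x)+z(1-y)+xz.
\]
Hence the right-hand side of \eqref{eq:propmain3} is $S(x,y,z)$.

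\textbf{The bound $T$.} The bracket in \eqref{eq:propmain4} is
\[
\lambda_2\lambda_1+\lambda_3\lambda_2+\lambda_1\lambda_3=xy+yz+xz.
\]
Hence the right-hand side of \eqref{eq:propmain4} is $T(x,y,z)$.

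\textbf{Conclusion.} The left-hand side is bounded by both $S$ and $T$, so it is bounded by $\min\{S,T\}$.

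The only point that needs care is that Propositions~\ref{prop:main3} and~\ref{prop:main4} rest on Corollary~\ref{cor:main}, which requires $1<p_i<\infty$, i.e.\ every weight $1/p_i$ lies in $(0,1)$. Here the weights $1/p_i$ are $y,z,x$, all in $(0,1)$, so this holds. Both propositions are stated under exactly the hypothesis $\lambda_i\in(0,1)$, and I am allowed to assume them, so no further check is needed. Everything else is direct substitution.
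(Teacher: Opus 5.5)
Your proof is correct and follows the paper's own route: the corollary is just Propositions~\ref{prop:main3} and~\ref{prop:main4} specialized to $n=3$ with $(\lambda_1,\lambda_2,\lambda_3)=(x,y,z)$. Your identification of the cyclic product and of the two brackets with those in $S$ and $T$ is accurate, and so is your check that the weights $y,z,x$ lie in $(0,1)$.
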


The comparison between $S$ and $T$ is the instance $n=3$ of
Theorem~\ref{thm:compare_n}.  We record the explicit statement.

\begin{corollary}\label{cor:compare3D}
With $S$ and $T$ as in Corollary~\ref{cor:main34}, set
\[
  \delta := y(1-2x)+z(1-2y).
\]
Then
\begin{equation}\label{eq:3Dcond2}
  S(x,y,z)\;\geq\; T(x,y,z)
  \quad\Longleftrightarrow\quad
  \delta\,(2z-1)\;\leq\;0,
\end{equation}
with equality $S=T$ if and only if $\delta=0$ or $z=\tfrac12$.
More explicitly,
\begin{equation}\label{eq:3Dcond}
  \begin{cases}
    S > T & \text{if } z<\tfrac12 \text{ and } \delta>0, \\[4pt]
    S < T & \text{if } z<\tfrac12 \text{ and } \delta<0, \\[4pt]
    S = T & \text{if } z=\tfrac12 \text{ or } \delta=0, \\[4pt]
    S < T & \text{if } z>\tfrac12 \text{ and } \delta>0, \\[4pt]
    S > T & \text{if } z>\tfrac12 \text{ and } \delta<0.
  \end{cases}
\end{equation}
\end{corollary}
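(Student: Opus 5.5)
The plan is to specialize Theorem~\ref{thm:compare_n} to $n=3$ with $\lambda_1=x$, $\lambda_2=y$, $\lambda_3=z$, so that $\lambda=x+y+z$.

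First I will match the notation. The numerator of $S$ is $y(1-x)+z(1-y)+xz$. This is exactly $\sigma=\sum_{i=1}^{2}\lambda_{i+1}(1-\lambda_i)+\lambda_1\lambda_3$. The numerator of $T$ is $xy+yz+xz$, which is exactly $\tau$. Hence $S$ and $T$ are the bounds of Propositions~\ref{prop:main3} and~\ref{prop:main4} for these parameters. Moreover,
\[
\delta_3=\lambda_2(1-2\lambda_1)+\lambda_3(1-2\lambda_2)=y(1-2x)+z(1-2y)=\delta .
\]
Theorem~\ref{thm:compare_n} then gives \eqref{eq:3Dcond2} directly, including the equality case $\delta=0$ or $z=\tfrac12$.

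The one point to verify is that the hypothesis of Lemma~\ref{lem:sincompare} holds, namely $\sigma,\tau\in(0,\lambda)$. Positivity of both is clear. For the upper bounds, $\tau=xy+yz+zx<y+z+x$ since each variable is less than $1$. Also $\sigma=y+z-xy-yz+xz$, so $\sigma<\lambda$ is equivalent to $xz<x+xy+yz$, which holds because $xz<x$. This check is implicitly used in Theorem~\ref{thm:compare_n} as well, so at most I would note it in passing.

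For the explicit case table \eqref{eq:3Dcond}, I will use the strict form of the sign criterion. By Lemma~\ref{lem:sincompare} and the monotonicity of $t\mapsto t^{\lambda}$, we have $S>T$ exactly when $(\sigma-\tau)(\sigma+\tau-\lambda)<0$. From the proof of Theorem~\ref{thm:compare_n}, this product equals $\delta\cdot x(2z-1)$. Since $x>0$, $S>T$ exactly when $\delta(2z-1)<0$, and $S<T$ exactly when $\delta(2z-1)>0$. Checking the four sign combinations of $\delta$ and $2z-1$ gives the listed cases, and the vanishing case gives $S=T$.

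I do not expect a real obstacle. The only care needed is the strict-versus-nonstrict bookkeeping, which is settled by the equality clause of Lemma~\ref{lem:sincompare}.
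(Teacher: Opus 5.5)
Your proposal is correct and follows the paper's proof: specialize Theorem~\ref{thm:compare_n} to $n=3$ with $(\lambda_1,\lambda_2,\lambda_3)=(x,y,z)$, identify $\delta_3=\delta$, and read off the case table from the signs of $\delta$ and $2z-1$. Your extra checks are also correct and only make explicit what the paper leaves implicit: that $\sigma,\tau\in(0,\lambda)$, and that strict inequality $S>T$ corresponds to $\delta\cdot x(2z-1)<0$ via the equality clause of Lemma~\ref{lem:sincompare}.
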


\begin{proof}
Apply Theorem~\ref{thm:compare_n} with $n=3$, $\lambda_1=x$,
$\lambda_2=y$, $\lambda_3=z$.  Then
\[
  \delta_3 = \lambda_2(1-2\lambda_1)+\lambda_3(1-2\lambda_2)
           = y(1-2x)+z(1-2y) = \delta,
\]
and the criterion $\delta_3(2\lambda_3-1)\leq 0$ becomes
$\delta(2z-1)\leq 0$, which is \eqref{eq:3Dcond2}.  The case
analysis of \eqref{eq:3Dcond} follows immediately from the signs of
$\delta$ and $(2z-1)$.

The equality $S=T$ holds if and only if $\delta_3=0$ (i.e., \ $\delta=0$)
or $\lambda_3=\tfrac12$ (i.e., \ $z=\tfrac12$).
\end{proof}

\begin{multicols}{2} 
    \noindent 
    \begin{minipage}{\columnwidth}
        \centering
        \includegraphics[width=\linewidth]{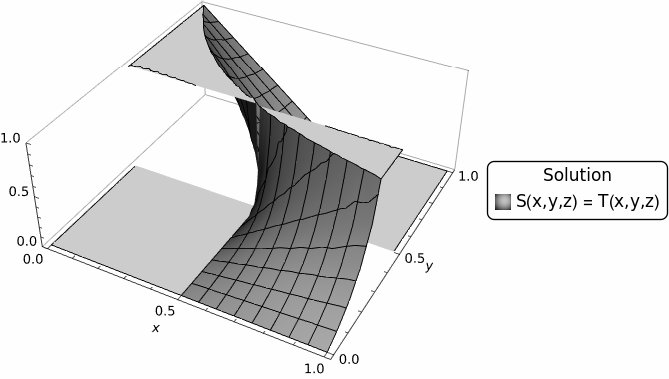} 
        \captionof{figure}{The condition $\delta = 0$, together with the critical point $z=\frac{1}{2}$, defines the boundary separating regions where each expression provides the sharper upper bound.}
        \label{fig:image2}
    \end{minipage}

    \columnbreak
    
    Mathematica Code:
    \begin{verbatim}
    Plot3D[(y (2 x - 1))/(1 - 2 y),
    {x, 0, 1}, {y, 0, 1}, PlotRange
    ->{0, 1}, PlotTheme->"Monochrome",
    AxesLabel->Automatic, PlotLegends
    -> SwatchLegend[{"S(x,y,z) = 
    T(x,y,z)"}, LegendFunction -> 
    "Frame",LegendLabel->"Solution"]]
    \end{verbatim}
\end{multicols}

\newpage
\begin{multicols}{2} 
    \noindent 
    \begin{minipage}{\columnwidth}
        \centering
        \includegraphics[width=\linewidth]{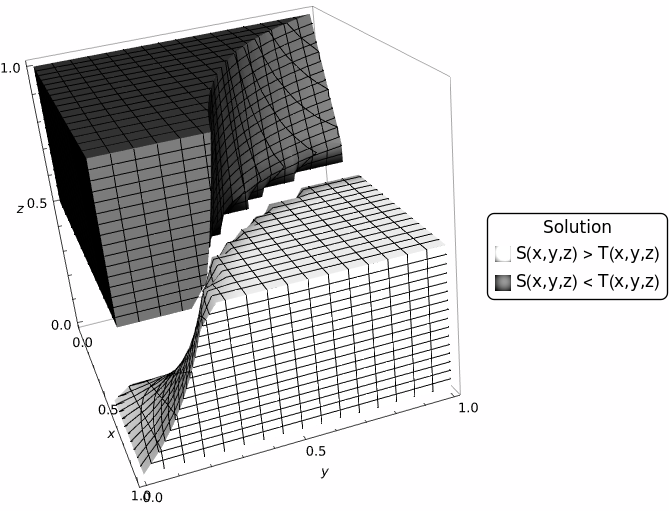} 
        \captionof{figure}{For $\delta>0$, the domain splits at $z=\frac{1}{2}$. In the light gray region $(z<\frac{1}{2})$, $T(x,y,z)$ provides the sharper upper bound, while in the dark gray region $(z>\frac{1}{2})$ $S(x,y,z)$ is sharper.}
        \label{fig:image3}
    \end{minipage}

    \columnbreak

    Mathematica Code:
    \begin{verbatim}
    RegionPlot3D[{(y (1 - 2 x) + 
    z (1 - 2 y))(2 z - 1)>0 && 
    z<1/2, (y (1 - 2 x) + z (1 - 2 y))
    (2 z - 1) > 0 && z > 1/2}, {x, 
    0, 1}, {y, 0, 1}, {z, 0, 1}, 
    PlotTheme -> "Monochrome", 
    PlotStyle -> {Gray, Black},
    BoundaryStyle -> None, AxesLabel
    -> Automatic, PlotLegends -> 
    SwatchLegend[Automatic, {"S(x,y,z)
    > T(x,y,z)", "S(x,y,z)<T(x,y,z)"},
    LegendFunction -> "Frame",
    LegendLabel -> "Solution"]]
    \end{verbatim}
\end{multicols}

\begin{multicols}{2} 
    \noindent 
    \begin{minipage}{\columnwidth}
        \centering
        \includegraphics[width=\linewidth]{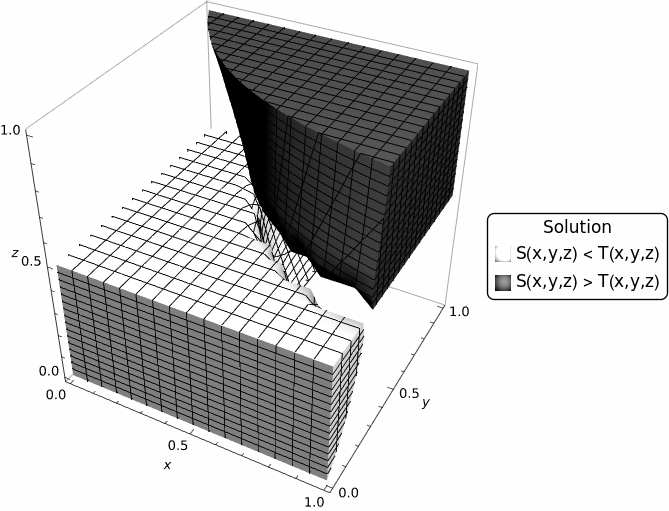}
        \captionof{figure}{For $\delta>0$, the domain splits at $z=\frac{1}{2}$. In the light gray region $(z<\frac{1}{2})$, $S(x,y,z)$ provides the sharper upper bound, while in the dark gray region $(z>\frac{1}{2})$, $T(x,y,z)$ is sharper.}
        \label{fig:image4}
    \end{minipage}

    \columnbreak
   
    Mathematica Code:
    \begin{verbatim}
    RegionPlot3D[{(y (1 - 2 x) +
    z (1 - 2 y))(2 z - 1)<0 && 
    z<1/2, (y (1 - 2 x) + z (1 - 2 y))
    (2 z - 1) < 0 && z > 1/2}, {x,
    0, 1}, {y, 0, 1}, {z, 0, 1}, 
    PlotTheme -> "Monochrome", 
    PlotStyle -> {Gray, Black},
    BoundaryStyle -> None, AxesLabel
    -> Automatic, PlotLegends ->
    SwatchLegend[Automatic, {"S(x,y,z)
    < T(x,y,z)", "S(x,y,z)>T(x,y,z)"},
    LegendFunction -> "Frame",
    LegendLabel -> "Solution"]]
    \end{verbatim}
\end{multicols}

We have the following reduction to the two angles from the above three angle results.
\begin{corollary}\label{cor:diagonal3D}
For $0<x<1$ and $0<z<1$,
\begin{equation}\label{eq:diagonal3D}
  \bigl(\sin(\pi x)\bigr)^{x+z}\,\bigl(\sin(\pi z)\bigr)^x
  \;\leq\;
  \begin{cases}
    \Bigl(\sin\left(\dfrac{\pi\left[x(1-x)+z\right]}{2x+z}\right)\Bigr)^{\!2x+z}
    & \text{if } (x-\tfrac12)(z-\tfrac12)\geq 0, \\[10pt]
    \Bigl(\sin\left(\dfrac{\pi x(x+2z)}{2x+z}\right)\Bigr)^{\!2x+z}
    & \text{if } (x-\tfrac12)(z-\tfrac12)\leq 0,
  \end{cases}
\end{equation}
with equality between the two right-hand sides when $x=\tfrac12$ or $z=\tfrac12$.
\end{corollary}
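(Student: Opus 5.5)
The plan is to specialise Corollary~\ref{cor:main34} (the three-angle case) to the diagonal $y=x$, and then read off the comparison from Corollary~\ref{cor:compare3D}.

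First, put $y=x$ in \eqref{eq:main34}. The left-hand side becomes
\[
(\sin\pi x)^{x}(\sin\pi x)^{z}(\sin\pi z)^{x}=(\sin\pi x)^{x+z}(\sin\pi z)^x,
\]
and the total weight is $\lambda=x+y+z=2x+z$. The two numerators simplify as follows:
\begin{align*}
y(1-x)+z(1-y)+xz &= x(1-x)+z(1-x)+xz = x(1-x)+z,\\
xy+yz+xz &= x^2+2xz = x(x+2z).
\end{align*}
So $S(x,x,z)$ and $T(x,x,z)$ are exactly the two right-hand sides displayed in \eqref{eq:diagonal3D}. Since Corollary~\ref{cor:main34} bounds the product by $\min\{S,T\}$, each of the two bounds holds for all $x,z\in(0,1)$. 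In particular, the inequality in \eqref{eq:diagonal3D} holds in each listed case, whatever the sign of $(x-\tfrac12)(z-\tfrac12)$.

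Next, to justify the case split and the equality statement, I would apply Corollary~\ref{cor:compare3D} with $y=x$. Here
\[
\delta=x(1-2x)+z(1-2x)=(x+z)(1-2x).
\]
Since $x+z>0$, the criterion $\delta(2z-1)\le 0$ becomes $(1-2x)(2z-1)\le0$, that is, $(x-\tfrac12)(z-\tfrac12)\ge0$. In that region $S\ge T$. Equality $S=T$ occurs exactly when $\delta=0$ or $z=\tfrac12$, i.e. when $x=\tfrac12$ or $z=\tfrac12$, which is the stated equality between the two right-hand sides.

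The main point needing care is the orientation of the cases. By the computation above, on $(x-\tfrac12)(z-\tfrac12)\ge0$ the smaller, and hence sharper, bound is $T$, i.e. the expression with numerator $x(x+2z)$. For example, at $x=z$ small, $S\approx\sin(2\pi/3)^{3x}$ while $T$ involves $\sin(\pi x)$. So the displayed assignment of expressions to cases does not pick the minimum.

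The inequality itself remains valid as stated, because both bounds hold unconditionally. I would therefore present the proof as: validity from Corollary~\ref{cor:main34}, and the comparison and equality from Corollary~\ref{cor:compare3D}. I would also remark that swapping the two cases gives the sharp form, which mirrors Corollary~\ref{cor:compare2D}.
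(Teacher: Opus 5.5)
Your proof is correct and follows the paper's route: you specialise \eqref{eq:main34} at $y=x$ to identify the two right-hand sides as $S(x,x,z)$ and $T(x,x,z)$, then apply Corollary~\ref{cor:compare3D} with $\delta=(x+z)(1-2x)$. Your orientation is the right one, and the paper's proof gets it backwards: $\delta(2z-1)\le 0$ is equivalent to $(x-\tfrac12)(z-\tfrac12)\ge 0$, so on that region $S\ge T$, not ``$S\le T$ (hence $\min\{S,T\}=S$)'' as the paper asserts (your diagonal check settles it, since $T(x,x,x)=(\sin\pi x)^{3x}$ is the left-hand side itself); the displayed inequality nonetheless holds for exactly the reason you give, namely that both bounds are valid for all $x,z\in(0,1)$.
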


\begin{proof}
Set $y=x$ in Corollary~\ref{cor:main34} or consider Corollary \ref{cor:main34} for $n=2$ (take $\lambda_2=x$ and $\lambda_1=z$).  The left-hand side becomes
$(\sin(\pi x))^{x+z}(\sin(\pi z))^x$, and the two bounds are now
\begin{align*}
  S(x,x,z)
  &= \Bigl(\sin\tfrac{\pi\left[x(1-x)+z\right]}{2x+z}\Bigr)^{\!2x+z},\\[4pt]
  T(x,x,z)
  &= \Bigl(\sin\tfrac{\pi x(x+2z)}{2x+z}\Bigr)^{\!2x+z}.
\end{align*}
With $y=x$, the quantity $\delta$ of Corollary~\ref{cor:compare3D} becomes
\[
  \delta = x(1-2x)+z(1-2x) = (x+z)(1-2x).
\]
Since $x+z>0$, the criterion $\delta(2z-1)\leq 0$ reduces to
$(1-2x)(2z-1)\leq 0$, that is
\[
  (x-\tfrac12)(z-\tfrac12)\;\geq\;0.
\]
Thus $S\leq T$ (hence $\min\{S,T\}=S$) when $(x-\tfrac12)(z-\tfrac12)\geq 0$,
and $S\geq T$ (so $\min\{S,T\}=T$) when $(x-\tfrac12)(z-\tfrac12)\leq 0$.
The equality $S=T$ holds when $x=\tfrac12$ or $z=\tfrac12$.
\end{proof}

\begin{multicols}{2} 
    \noindent 
    \begin{minipage}{\columnwidth}
        \centering
        \includegraphics[width=\linewidth]{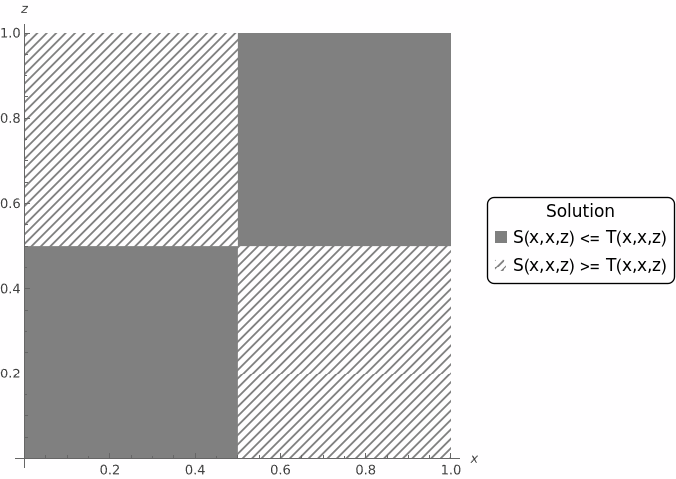}
        \captionof{figure}{$S(x,x,z)$ is sharper in the gray region, whereas $T(x,x,z)$ is sharper in the striped region.}
        \label{fig:image5}
    \end{minipage}

    \columnbreak
   
    Mathematica Code:
    \begin{verbatim}
    RegionPlot[{(x - 1/2)(z - 1/2)
    >= 0, (x - 1/2)(z - 1/2) <= 0},
    {x, 0, 1}, {z, 0, 1}, PlotStyle
    ->{Gray, Directive[Gray, 
    HatchFilling[Pi/4, 1.5]]},
    BoundaryStyle->None, PlotLegends
    -> SwatchLegend[Automatic, 
    {"S(x,x,z)<=T(x,x,z)", "S(x,x,z)
    >= T(x,x,z)"}, LegendFunction ->
    "Frame", LegendLabel->"Solution"],
    Frame -> False, Axes -> True,
    AxesLabel -> Automatic]
    \end{verbatim}
\end{multicols}

The following two angle result also follows.
\begin{corollary}\label{cor:diagonal3D2}
For $0<x<1$ and $0<y<1$,
\begin{equation}\label{eq:diagonal3D2}
  \bigl(\sin(\pi x)\bigr)^y\,\bigl(\sin(\pi y)\bigr)^{x+y}
  \;\leq\;
  \begin{cases}
    \Bigl(\sin\dfrac{\pi y(2-y)}{x+2y}\Bigr)^{\!x+2y}
    & \text{if } (x+y-1)(2y-1)\geq 0, \\[10pt]
    \Bigl(\sin\dfrac{\pi y(2x+y)}{x+2y}\Bigr)^{\!x+2y}
    & \text{if } (x+y-1)(2y-1)\leq 0,
  \end{cases}
\end{equation}
with equality between the two right-hand sides when $y=\tfrac12$ or $x+y=1$.
\end{corollary}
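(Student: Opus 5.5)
The plan is to specialize Corollary~\ref{cor:main34} (the three-angle result) by setting $z=y$, and then to read off which bound is smaller from Corollary~\ref{cor:compare3D}.

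\textbf{Step 1: the left-hand side.} With $z=y$, the product $(\sin\pi x)^y(\sin\pi y)^z(\sin\pi z)^x$ becomes $(\sin\pi x)^y(\sin\pi y)^{x+y}$, which is exactly the left-hand side of \eqref{eq:diagonal3D2}. The total weight becomes $x+y+z=x+2y$.

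\textbf{Step 2: the two bounds.} Substituting $z=y$ into the arguments gives
\[
y(1-x)+y(1-y)+xy=y(2-y)
\quad\text{and}\quad
xy+y^2+xy=y(2x+y).
\]
Hence $S(x,y,y)$ and $T(x,y,y)$ are exactly the two right-hand sides displayed in \eqref{eq:diagonal3D2}. Corollary~\ref{cor:main34} then gives that the left-hand side is at most $\min\{S(x,y,y),T(x,y,y)\}$. In particular, both displayed inequalities hold for every $(x,y)\in(0,1)^2$. So each branch of the case statement is valid on its stated region, regardless of which bound is the smaller one there.

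\textbf{Step 3: the comparison.} With $z=y$, the quantity of Corollary~\ref{cor:compare3D} is
\[
\delta=y(1-2x)+y(1-2y)=2y(1-x-y).
\]
Since $y>0$, the criterion $\delta(2z-1)\le 0$ reduces to $(1-x-y)(2y-1)\le0$, that is, $(x+y-1)(2y-1)\ge 0$. So the sign of $(x+y-1)(2y-1)$ decides the ordering of the two bounds. The equality clause of Corollary~\ref{cor:compare3D} gives $S=T$ exactly when $\delta=0$ or $z=\tfrac12$, i.e.\ when $x+y=1$ or $y=\tfrac12$. This is the claimed equality of the two right-hand sides.

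\textbf{Main obstacle.} The only delicate point is the orientation of the comparison. Corollary~\ref{cor:compare3D} says $S\ge T$ when $(x+y-1)(2y-1)\ge0$, so on that region the sharper bound is in fact $T$, not $S$. However, the statement as written only asserts that the left-hand side is bounded by the indicated expression on each region. That follows from Step 2 alone, since both $S$ and $T$ are always upper bounds. So I would present the inequality via Step 2, and use Step 3 only for the equality clause and to identify which bound is sharper. I would also remark on the orientation so the reader does not misread the case split as ``choosing the minimum''.
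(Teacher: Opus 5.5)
Your proposal is correct and follows the paper's proof. Like the paper, you set $z=y$ in Corollary~\ref{cor:main34}, identify $S(x,y,y)$ and $T(x,y,y)$, compute $\delta=2y(1-x-y)$, and invoke Corollary~\ref{cor:compare3D} for the equality clause.

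Your orientation remark is also right, and it corrects the paper. The paper's proof asserts $S\le T$ (so $\min\{S,T\}=S$) on the region $(x+y-1)(2y-1)\ge 0$. Corollary~\ref{cor:compare3D} gives $S\ge T$ there. For a concrete check, take the diagonal $x=y$, which lies in that region: there $T$ coincides with the left-hand side $\sin^{3x}(\pi x)$, so $T\le S$. Hence the displayed case split holds only for the reason you give, namely that both bounds hold for all $(x,y)\in(0,1)^2$.
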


\begin{proof}
Set $z=y$ in Corollary~\ref{cor:main34} or consider Corollary \ref{cor:main34} for $n=2$ (take $\lambda_2=y$ and $\lambda_1=x$).  The left-hand side becomes
$(\sin(\pi x))^y(\sin(\pi y))^{x+y}$, and the two bounds become
\begin{align*}
  S(x,y,y)
  &= \Bigl(\sin\tfrac{\pi y(2-y)}{x+2y}\Bigr)^{\!x+2y},\\[4pt]
  T(x,y,y)
  &= \Bigl(\sin\tfrac{\pi y(2x+y)}{x+2y}\Bigr)^{\!x+2y}.
\end{align*}
With $z=y$, the quantity $\delta$ of Corollary~\ref{cor:compare3D} becomes
\[
  \delta = y(1-2x)+y(1-2y) = 2y(1-x-y),
\]
and the criterion $\delta(2z-1)\leq 0$ with $z=y$ becomes
$2y(1-x-y)(2y-1)\leq 0$.  Since $y>0$, this reduces to
\[
  (x+y-1)(2y-1)\;\geq\;0.
\]
Thus $S\leq T$ (so $\min\{S,T\}=S$) when $(x+y-1)(2y-1)\geq 0$,
i.e.\ when $x+y\geq 1$ and $y\geq\tfrac12$, or $x+y\leq 1$ and $y\leq\tfrac12$.
And $S\geq T$ (so $\min\{S,T\}=T$) in the complementary region.
Equality holds when $y=\tfrac12$ or $x+y=1$.
\end{proof}

\begin{multicols}{2} 
    \noindent 
    \begin{minipage}{\columnwidth}
        \centering
        \includegraphics[width=\linewidth]{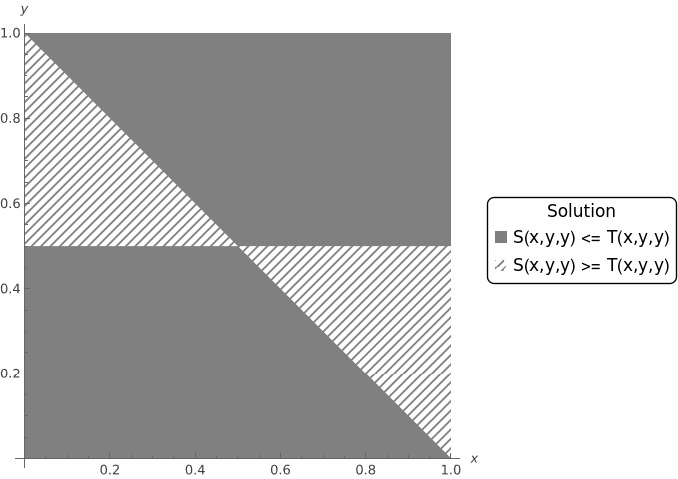} 
        \captionof{figure}{$S(x,y,y)$ is sharper in the gray region, whereas $T(x,y,y)$ is sharper in the striped region.}
        \label{fig:image6}
    \end{minipage}

    \columnbreak
   
    Mathematica Code:
    \begin{verbatim}
    RegionPlot[{(x + y - 1)(2 y - 1)
    >=0, (x + y - 1)(2 y - 1) <= 0},
    {x, 0, 1}, {y, 0, 1}, PlotStyle 
    -> {Gray, Directive[Gray, 
    HatchFilling[Pi/4, 1.5]]}, 
    BoundaryStyle->None, PlotLegends
    -> SwatchLegend[Automatic, 
    {"S(x,y,y)<=T(x,y,y)", "S(x,y,y)
    >= T(x,y,y)"}, LegendFunction ->
    "Frame", LegendLabel->"Solution"],
    Frame -> False, Axes -> True,
    AxesLabel -> Automatic]
    \end{verbatim}
\end{multicols}

The third two angle restriction result is as follows.
\begin{corollary}\label{cor:diagonal3D3}
For $0<x<1$ and $0<y<1$,
\begin{equation}\label{eq:diagonal3D3}
  \bigl(\sin(\pi x)\bigr)^{x+y}\,\bigl(\sin(\pi y)\bigr)^x
  \;\leq\;
  \begin{cases}
    \Bigl(\sin\dfrac{\pi [x(1+x)+y(1-2x)]}{2x+y}\Bigr)^{\!2x+y}
    & \text{if } (4xy-x-y)(2x-1)\geq 0, \\[10pt]
    \Bigl(\sin\dfrac{\pi x(x+2y)}{2x+y}\Bigr)^{\!2x+y}
    & \text{if } (4xy-x-y)(2x-1)\leq 0,
  \end{cases}
\end{equation}
with equality between the two right-hand sides when $x=\tfrac12$ or $4xy = x+y$.
\end{corollary}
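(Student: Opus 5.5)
The plan is to set $z=x$ in Corollary~\ref{cor:main34}, exactly as Corollary~\ref{cor:diagonal3D} set $y=x$. With $(x,y,z)=(x,y,x)$ the left-hand side of \eqref{eq:main34} becomes
\[
\bigl(\sin(\pi x)\bigr)^y\bigl(\sin(\pi y)\bigr)^x\bigl(\sin(\pi x)\bigr)^x=\bigl(\sin(\pi x)\bigr)^{x+y}\bigl(\sin(\pi y)\bigr)^x .
\]
This is the left-hand side of \eqref{eq:diagonal3D3}, and the total weight is $x+y+z=2x+y$.

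Next I compute the two arguments. For $S$ the bracket is
\[
y(1-x)+x(1-y)+x^2=x(1+x)+y(1-2x),
\]
and for $T$ it is $xy+yx+x^2=x(x+2y)$. These match the two right-hand sides of \eqref{eq:diagonal3D3}. Since Corollary~\ref{cor:main34} bounds the product by $\min\{S,T\}$, the product lies below \emph{both} $S(x,y,x)$ and $T(x,y,x)$ for every $x,y\in(0,1)$. In particular, each case of \eqref{eq:diagonal3D3} holds on its stated region, and indeed everywhere.

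To justify the case split and the equality claim, I apply Corollary~\ref{cor:compare3D} with $z=x$. Then
\[
\delta=y(1-2x)+x(1-2y)=x+y-4xy,
\]
and the criterion $\delta(2z-1)\le 0$ becomes $(x+y-4xy)(2x-1)\le0$, i.e.\ $(4xy-x-y)(2x-1)\ge 0$. By the same corollary, $S=T$ exactly when $\delta=0$ or $z=\tfrac12$, that is when $4xy=x+y$ or $x=\tfrac12$, which is the stated equality condition.

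The only delicate point is the direction of the comparison. Corollary~\ref{cor:compare3D} gives $S\ge T$ on the region $(4xy-x-y)(2x-1)\ge0$, so on that region $T$ is actually the smaller (sharper) bound. The displayed inequality is still correct as stated, because both bounds are valid everywhere. In the write-up I will therefore state separately the region on which each bound is the sharper one, rather than deduce the inequality from the case split.
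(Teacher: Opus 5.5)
Your proposal is correct and follows the paper's own route: substitute $z=x$ in the three-angle bound \eqref{eq:main34}, then read off $\delta=x+y-4xy$ from Corollary~\ref{cor:compare3D} to get the case split and the equality condition. Your caution about the direction is also warranted: the paper's proof asserts $S\le T$ (so $\min\{S,T\}=S$) on the region $(4xy-x-y)(2x-1)\ge 0$, which is backwards (at $x=y=\tfrac{1}{10}$ the left-hand side equals $T(x,y,x)=(\sin\tfrac{\pi}{10})^{3/10}<S(x,y,x)$), so the displayed inequality is justified exactly as you argue, because both bounds hold for all $x,y\in(0,1)$.
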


\begin{proof}
Set $z=x$ in Corollary~\ref{cor:main34}.  The left-hand side becomes
$(\sin(\pi x))^{x+y}(\sin(\pi y))^y$, and the two bounds become
\begin{align*}
  S(x,y,x)
  &= \Bigl(\sin\dfrac{\pi [x(1+x)+y(1-2x)]}{2x+y}\Bigr)^{\!2x+y},\\[4pt]
  T(x,y,x)
  &= \Bigl(\sin\dfrac{\pi x(x+2y)}{2x+y}\Bigr)^{\!2x+y}.
\end{align*}
With $z=x$, the quantity $\delta$ of Corollary~\ref{cor:compare3D} becomes
\[
  \delta = y(1-2x)+x(1-2y) = x+y-4xy,
\]
and the criterion $\delta(2z-1)\leq 0$ with $z=x$ becomes
\[
  (4xy-x-y)(2x-1)\;\geq\;0.
\]
 Thus $S\leq T$ (so $\min\{S,T\}=S$) when $(4xy-x-y)(2x-1)\geq 0$,
 i.e.\ when $4xy \geq x+y$ and $x\geq\tfrac12$, or $4xy \leq x+y$ and $x\leq\tfrac12$.
 And $S\geq T$ (so $\min\{S,T\}=T$) in the complementary region.
 Equality holds when $x=\tfrac12$ or $4xy = x+y$.
\end{proof}

\begin{multicols}{2} 
    \noindent 
    \begin{minipage}{\columnwidth}
        \centering
        \includegraphics[width=\linewidth]{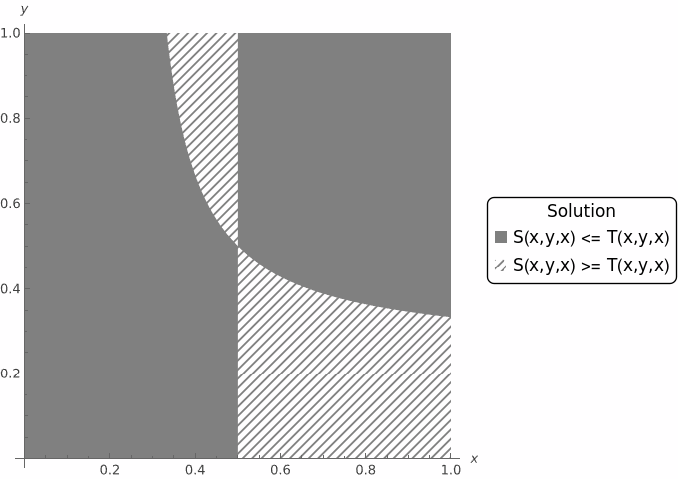}
        \captionof{figure}{$S(x,y,x)$ is sharper in the gray region, whereas $T(x,y,x)$ is sharper in the striped region.}
        \label{fig:image7}
    \end{minipage}

    \columnbreak
   
    Mathematica Code:
    \begin{verbatim}
    RegionPlot[{(x + y - 1)(2 y - 1)
    RegionPlot[{(4 x y - x - y)
    (2 x - 1) >= 0, (4 x y - x - y) 
    (2 x - 1) <= 0}, {x, 0, 1}, 
    {y, 0, 1}, PlotStyle -> {Gray,
    Directive[Gray, HatchFilling
    [Pi/4, 1.5]]}, BoundaryStyle ->
    None, PlotLegends->SwatchLegend[
    Automatic, {"S(x,y,x)<=T(x,y,x)",
    "S(x,y,x) >= T(x,y,x)"}, 
    LegendFunction -> "Frame", 
    LegendLabel -> "Solution"], 
    Frame -> False, Axes -> True, 
    AxesLabel -> Automatic]
    \end{verbatim}
\end{multicols}

Finally, combining (\ref{eq:ysinineq}), Corollary \ref{cor:diagonal3D}, Corollary \ref{cor:diagonal3D2}, and Corollary \ref{cor:diagonal3D3}, we obtain the following.
\begin{corollary}\label{cor:3Dto1D}
 Let $0<x<1$. Then
 \[
 \sin(\pi x)\leq \begin{cases}
    \sin\left(2\pi x(1-x)\right)
    & \text{if } x\in(0,\frac 13], \\[10pt]
    \sin\left(\frac{\pi}{3}(1+2x)\right)
    & \text{if } x\in[\frac 13,1).
  \end{cases}\]
\end{corollary}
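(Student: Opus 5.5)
The plan is to treat the two ranges separately, reducing each to a comparison of two sine values with arguments in $(0,\pi)$, and then to settle that comparison with Lemma~\ref{lem:sincompare}.

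\textbf{First range.} For $x\in(0,\tfrac13]$ nothing new is needed. Corollary~\ref{cor:diagonal} gives $\sin(\pi x)\le\sin(2\pi x(1-x))$ for every $x\in(0,1)$, so in particular on this interval.

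\textbf{Second range.} For $x\in[\tfrac13,1)$ I would not go through the chain of Corollaries~\ref{cor:diagonal3D}--\ref{cor:diagonal3D3} (for instance putting $x=y=z$ there). Doing so only yields the arguments $x$ and $\tfrac{2-x}{3}$, and $\sin\bigl(\pi\tfrac{2-x}{3}\bigr)=\sin\bigl(\pi\tfrac{1+x}{3}\bigr)$, which is not the argument $\tfrac{1+2x}{3}$ claimed. Instead I would compare the two sides directly using Lemma~\ref{lem:sincompare} with
\[
\lambda=1,\qquad \sigma=\tfrac{1+2x}{3},\qquad \tau=x .
\]
Both $\sigma$ and $\tau$ lie in $(0,1)$, so the lemma applies, and
\[
\sigma-\tau=\tfrac{1-x}{3}>0,\qquad \sigma+\tau-1=\tfrac{5x-2}{3}.
\]
Hence $\sin\bigl(\tfrac{\pi(1+2x)}{3}\bigr)\ge\sin(\pi x)$ holds if and only if $5x-2\le 0$, that is, if and only if $x\le\tfrac25$. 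Equality occurs exactly at $x=\tfrac25$.

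\textbf{Consistency at $x=\tfrac13$.} The two bounds agree there: $2x(1-x)=\tfrac49$ and $\tfrac{1+2x}{3}=\tfrac59$, and $\sin\tfrac{4\pi}{9}=\sin\tfrac{5\pi}{9}$. So the second bound is valid on $[\tfrac13,\tfrac25]$ and glues continuously to the first.

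\textbf{Main obstacle.} The second bound cannot hold on all of $[\tfrac13,1)$. For $x>\tfrac25$ the criterion above reverses. Concretely, at $x=0.9$ we have $\sin(0.9\pi)\approx0.309$, while $\sin(0.9\overline{3}\,\pi)\approx0.208$. My proposal is therefore to prove the statement with the second case restricted to $x\in[\tfrac13,\tfrac25]$, by the Lemma~\ref{lem:sincompare} computation above. On $(\tfrac25,1)$ I would fall back on the first bound from Corollary~\ref{cor:diagonal}, which remains valid for all $x$. I would flag the range $[\tfrac13,1)$ in the statement as needing correction.
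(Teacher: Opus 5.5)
Your diagnosis is correct, and your argument proves a correct version of the statement. The second case is false on $(\tfrac25,1)$. Already at $x=\tfrac12$ the left side is $1$, while $\sin\bigl(\tfrac{2\pi}{3}\bigr)=\tfrac{\sqrt3}{2}$.

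The paper's proof breaks at its first display. There it asserts that specializing Corollaries~\ref{cor:diagonal3D}--\ref{cor:diagonal3D3} to $x=y=z$ yields the bound $\sin\bigl(\tfrac{2\pi}{3}(1-x)\bigr)=\sin\bigl(\tfrac{\pi}{3}(1+2x)\bigr)$. As you observe, these specializations only give $\sin(\pi x)$ and $\sin\bigl(\tfrac{\pi}{3}(2-x)\bigr)=\sin\bigl(\tfrac{\pi}{3}(1+x)\bigr)$. The spurious argument $\tfrac{2(1-x)}{3}$ is what you get by dropping the term $xz=x^2$ from the numerator of $S(x,x,x)$. The paper's later pairwise minima, computed with Lemma~\ref{lem:sincompare}, are arithmetically correct. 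However, one of the three functions being minimized is not an upper bound for $\sin(\pi x)$ beyond $x=\tfrac25$, exactly as your computation $(\sigma-\tau)(\sigma+\tau-1)=\tfrac{(1-x)(5x-2)}{9}$ shows.

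Your repair uses Corollary~\ref{cor:diagonal} on $(0,\tfrac13]$ and on $(\tfrac25,1)$, and $\sin\bigl(\tfrac\pi3(1+2x)\bigr)$ on $[\tfrac13,\tfrac25]$. It is valid. On $(\tfrac13,\tfrac25]$ your bound is strictly sharper than both $\sin(2\pi x(1-x))$ and $\sin\bigl(\tfrac\pi3(1+x)\bigr)$, and it meets $\sin(\pi x)$ at $x=\tfrac25$. Note, though, that it comes from a direct elementary comparison, not from the paper's Gamma-function machinery.

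The version the paper's method actually supports is obtained by discarding the invalid bound. The paper's own computation of $\min\bigl\{\sin(2\pi x(1-x)),\sin\bigl(\tfrac\pi3(1+x)\bigr)\bigr\}$ then gives:
\begin{itemize}
\item $\sin(\pi x)\le\sin(2\pi x(1-x))$ on $(0,\tfrac13]\cup[\tfrac23,1)$;
\item $\sin(\pi x)\le\sin\bigl(\tfrac\pi3(1+x)\bigr)$ on $[\tfrac13,\tfrac23]$.
\end{itemize}
In particular, on $(\tfrac25,\tfrac23)$ your fallback $\sin(2\pi x(1-x))$ is never sharper than the available bound $\sin\bigl(\tfrac\pi3(1+x)\bigr)$. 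You could therefore combine the two results to get the best bound on each subinterval.
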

\begin{proof}
Take $x=z$ in Corollary \ref{cor:diagonal3D}
and $x=y$ in Corollary \ref{cor:diagonal3D2} or Corollary \ref{cor:diagonal3D3}. We obtain
$$\sin(\pi x)\leq \min\left\{\sin(2\pi x(1-x)),\sin\left(\frac{2\pi}{3}(1-x)\right),\sin\left(\frac{\pi}{3}(2-x)\right)\right\}.$$
We observe that $\sin\left(\frac{2\pi}{3}(1-x)\right)=\sin\left(\frac{\pi}{3}(1+2x)\right)$ and $\sin\left(\frac{\pi}{3}(2-x)\right)=\sin\left(\frac{\pi}{3}(1+x)\right)$. Thus   $$\sin(\pi x)\leq \min\left\{\sin(2\pi x(1-x)),\sin\left(\frac{\pi}{3}(1+x)\right),\sin\left(\frac{\pi}{3}(1+2x)\right)\right\}.$$
Using Lemma \ref{lem:sincompare}, we obtain
\[
 \min\left\{\sin(2\pi x(1-x)),\sin\left(\frac{\pi}{3}(1+x)\right)\right\}= \begin{cases}
    \sin\left(2\pi x(1-x)\right)
    & \text{if } x\in(0,\frac 13]\cup [\frac 23,1), \\[10pt]
    \sin\left(\frac{\pi}{3}(1+x)\right)
    & \text{if } x\in[\frac 13,\frac 23];
  \end{cases}\]
  \[
 \min\left\{\sin(2\pi x(1-x)),\sin\left(\frac{\pi}{3}(1+2x)\right)\right\}= \begin{cases}
    \sin\left(2\pi x(1-x)\right)
    & \text{if } x\in(0,\frac 13], \\[10pt]
    \sin\left(\frac{\pi}{3}(1+2x)\right)
    & \text{if } x\in[\frac 13,1)
  \end{cases}\]
  and
  \[
 \min\left\{\sin\left(\frac{\pi}{3}(1+x)\right),\sin\left(\frac{\pi}{3}(1+2x)\right)\right\}= \begin{cases}
    \sin\left(\frac{\pi}{3}(1+x)\right)
    & \text{if } x\in(0,\frac 13], \\[10pt]
    \sin\left(\frac{\pi}{3}(1+2x)\right)
    & \text{if } x\in[\frac 13,1).
  \end{cases}\]
  We deduce that
   \[
 \min\left\{\sin(2\pi x(1-x)),\sin\left(\frac{\pi}{3}(1+x)\right),\sin\left(\frac{\pi}{3}(1+2x)\right)\right\}= \begin{cases}
    \sin\left(2\pi x(1-x)\right)
    & \text{if } x\in(0,\frac 13], \\[10pt]
    \sin\left(\frac{\pi}{3}(1+2x)\right)
    & \text{if } x\in[\frac 13,\frac 23], \\[10pt]
    \sin\left(\frac{\pi}{3}(1+2x)\right)
    & \text{if } x\in[\frac 23,1).
  \end{cases}\]
  The proof is complete.
\end{proof}

\begin{multicols}{2} 
    \noindent 
    \begin{minipage}{\columnwidth}
        \centering
        \includegraphics[width=\linewidth]{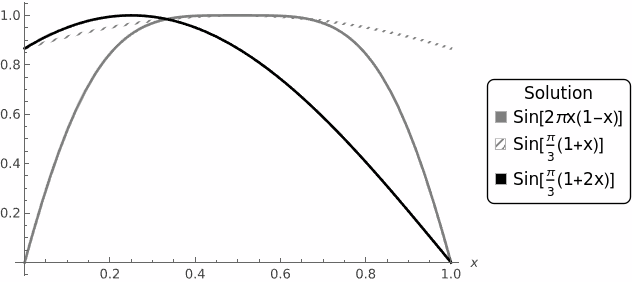} 
        \captionof{figure}{The gray curve provides the sharper bound on the interval $x\in (0,\frac{1}{3}]$. Likewise, the black curve yields the sharper bound on the interval $x\in [\frac{1}{3},1)$.}
        \label{fig:image8}
    \end{minipage}

    \columnbreak
   
    Mathematica Code:
    \begin{verbatim}
    Plot[{ConditionalExpression[Sin[
    2\[Pi] x (1 - x)], 0 < x <= 1],
    ConditionalExpression[Sin[\[Pi]/3
    (1 + x)], 0 <= x <= 1], 
    ConditionalExpression[Sin[\[Pi]/3
    (1 + 2 x)], 0 <= x < 1]}, {x, 0,
    1}, PlotStyle -> {Gray, Directive[
    Gray, HatchFilling[Pi/4, 1.5], 
    Thick], Black}, PlotLegends ->
    SwatchLegend[Automatic, {"Sin[2
    \[Pi]x(1-x)]", "Sin[\!\(
    \*FractionBox[\(\[Pi]\), \(3\)]\)
    (1+x)]", "Sin[\!\(\*FractionBox[
    \(\[Pi]\), \(3\)]\)(1+2x)]"}, 
    LegendFunction -> "Frame", 
    LegendLabel -> "Solution"], Frame 
    -> False Axes -> True, AxesLabel 
    -> Automatic]
    \end{verbatim}
\end{multicols}

\section{Conclusion and open problems}\label{sec:conclusion}

We have developed a unified Gamma-function approach to weighted
product inequalities for the sine function, and carried out a
complete analysis of the comparison between the two distinct upper
bounds that the approach produces.  
The two bounds $S$ and $T$ (or $P$ and $Q$) coincide exactly on
hyperplane sections of the parameter domain, and Theorems~\ref{thm:compare_n}
and~\ref{thm:compare_2n} identify these sections precisely.  A finer
question is how large the gap $|S-T|$ can be, and whether our approach produces bounds that are sharp in the sense
that equality in \eqref{eq:prodineqsine} is attained in the limit
as some parameters approach the boundary of $(0,1)^n$.

The log-convexity of the $q$-Gamma function $\Gamma_q$ for $0<q<1$,
established by Ismail, Lorch, and Muldoon \cite{Mitrinovic1970},
suggests that the entire framework of
Sections~\ref{sec:main}--\ref{sec:compare} could be carried out with
$\Gamma_q$ in place of $\Gamma$.  The corresponding reflection
formula involves the $q$-sine function, and the resulting
inequalities would be $q$-analogues of those proved here.


\end{document}